\documentclass[12pt,reqno]{amsart}
\usepackage[a4paper,margin=30mm]{geometry}

\usepackage{verbatim,upref,amsxtra,amsthm,xcolor,amssymb,amsxtra}

\usepackage[mathscr]{eucal}
\usepackage{bbm}
\usepackage{dsfont}

\usepackage[initials,nobysame]{amsrefs}

\usepackage{mathtools}
\usepackage[normalem]{ulem}

\usepackage{varioref}

\theoremstyle{plain}
\newtheorem{thm}{Theorem}[section]
\newtheorem{lemma}[thm]{Lemma}
\newtheorem{proposition}[thm]{Proposition}

\theoremstyle{definition}

\theoremstyle{remark}

\newtheorem{example}{\bf Example}

\numberwithin{equation}{section}

\title[Weyl-type theorem on LCA groups and applications]{A Weyl-type theorem for Diophantine approximations driven by
LCA groups and applications}

\author{Aihua FAN}
\address{(A. H. Fan) 
	LAMFA, UMR 7352 CNRS, University of Picardie, 33 rue Saint Leu, 80039 Amiens, France
    and
    Wuhan Institute for Math \& AI, Wuhan University, Wuhan 430072,  China}
\email{ai-hua.fan@u-picardie.fr}

\date{} 

\begin{document}

\maketitle

\begin{abstract} 
 We investigate actions of locally compact Abelian (LCA) groups on the torus $\mathbb{T}^n$, motivated by their close connection with Diophantine approximation. While Kronecker’s theorem yields a classical density result, we prove a stronger equidistribution theorem of Weyl type: every such action admits a decomposition into uniquely ergodic subsystems. The proof of this result is based on a characterization of unique ergodicity for actions of amenable groups on compact metric spaces. As consequences, we establish several foundational results for LCA groups, including the Bohr orthogonality of characters along arbitrary Følner sequences, a Bohr mean formula for almost periodic functions, and a Wiener-type theorem on LCA groups characterizing the discrete part of a Borel probability measure through its Fourier transform. An application to numerical analysis is also discussed.

\end{abstract}

\section{Introduction and main results}

Let $G$ be a locally compact Abelian (LCA) group with  dual group $\widehat{G}$. Let $n \ge 1$ be an integer. 
We fix $n$ elements  $g_1, g_2, \cdots, g_n$ of $G$
and $n$ real numbers $\theta_1, \theta_2, \cdots, \theta_n$ (we can assume that $\theta_j \in [0,1)$ because only  
the fractional part of $\theta_j$ will be concerned).
The question of simultaneous  Diophantine approximation asks: under what condition the following holds: for any $\epsilon>0$ there exists a character 
$\gamma \in \widehat{G}$ such that
\begin{equation}\label{eq:Gdioph}
   \forall j \in \{1, 2, \cdots, n\}, \quad |\gamma(g_j) - e^{2\pi i \theta_j}| <\epsilon. 
\end{equation}

The answer is given in the following theorem. 

\begin{thm}[Kronecker] \label{thm:K}
Let $n \ge 1$ be an integer. Let  $g_1, g_2, \cdots, g_n$ be $n$ elements of
a locally compact Abelian  $G$ and  let $\theta_1, \theta_2, \cdots, \theta_n$ be $n$ real numbers.  A necessary and sufficient condition for 
the system \eqref{eq:Gdioph} to admit  a solution $\gamma \in \widehat{G}$ for every $\epsilon >0$ is
\begin{equation}\label{eq:WC}
   (u_1, \cdots, u_n) \in \mathbb{Z}^n, \ \ \ \sum_{j=1}^n u_j g_j =0\in G \ \ \  \Longrightarrow \ \ \ \sum_{j=1}^n u_j\theta_j  = 0 \pmod 1.
\end{equation}
\end{thm}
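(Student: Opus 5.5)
Necessity is immediate. Suppose \eqref{eq:Gdioph} is solvable for every $\epsilon>0$, and let $u\in\mathbb{Z}^n$ satisfy $\sum_j u_j g_j=0$. Since $\gamma$ is a homomorphism into the circle,
\[
1=\gamma\Bigl(\sum_j u_j g_j\Bigr)=\prod_j \gamma(g_j)^{u_j}.
\]
Taking $\gamma$ from \eqref{eq:Gdioph} with $\epsilon\to 0$, the right-hand side tends to $e^{2\pi i\sum_j u_j\theta_j}$, because the map $z\mapsto\prod_j z_j^{u_j}$ is continuous on $\mathbb{T}^n$. Hence $\sum_j u_j\theta_j\in\mathbb{Z}$.

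For sufficiency, I would use the evaluation map
\[
\Phi:\widehat G\to\mathbb{T}^n,\qquad \Phi(\gamma)=(\gamma(g_1),\dots,\gamma(g_n)).
\]
It is a continuous homomorphism, so $K:=\overline{\Phi(\widehat G)}$ is a closed subgroup of $\mathbb{T}^n$. The task is to show that \eqref{eq:WC} forces $(e^{2\pi i\theta_j})_j\in K$.

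The key tool is the duality for closed subgroups of $\mathbb{T}^n$. Every closed subgroup $K$ equals its double annihilator:
\[
K=\{z\in\mathbb{T}^n:\ \chi_u(z)=1 \text{ for all } u\in K^\perp\},
\]
where $\chi_u(z)=\prod_j z_j^{u_j}$ and $K^\perp=\{u\in\mathbb{Z}^n:\chi_u|_K\equiv1\}$. This is standard Pontryagin duality: a closed subgroup is separated from any outside point by a character trivial on it. I will take it as known; if needed, one proves it by passing to the quotient $\mathbb{T}^n/K$, which is a compact abelian group whose characters separate points by Peter–Weyl.

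Next I compute $K^\perp$. We have $u\in K^\perp$ iff $\gamma\bigl(\sum_j u_jg_j\bigr)=1$ for all $\gamma\in\widehat G$. By the Gelfand–Raikov theorem (equivalently, Pontryagin duality), characters of an LCA group separate points, so this holds iff $\sum_j u_jg_j=0$. Condition \eqref{eq:WC} says exactly that $\chi_u\bigl((e^{2\pi i\theta_j})_j\bigr)=1$ for every such $u$. By the double-annihilator description, $(e^{2\pi i\theta_j})_j\in K$, and the density of $\Phi(\widehat G)$ in $K$ gives the required $\gamma$ for every $\epsilon$.

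The main obstacle is the reliance on these two duality facts: characters of $G$ separate points, and closed subgroups of $\mathbb{T}^n$ equal their double annihilators. Both are standard consequences of Pontryagin duality and I would cite them rather than reprove them. The rest is bookkeeping.
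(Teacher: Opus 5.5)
Your proposal is correct and follows essentially the same route as the paper: the paper also takes $H=\overline{{\rm Orb}(\mathbf{1})}$ (the closure of your $\Phi(\widehat G)$), computes $H^\perp=\{\mathbf{u}\in\mathbb{Z}^n:\sum_j u_jg_j=0\}$ using that characters separate points (Lemma~\ref{lem:Hperp}), and concludes with the double-annihilator property via $\widehat{\mathbb{T}^n/H}=H^\perp$. The only cosmetic difference is that you prove necessity directly by a continuity argument, whereas the paper reads both directions off the single equivalence $\mathbf{z}\in H \iff \chi_{\mathbf{u}}(\mathbf{z})=1$ for all $\mathbf{u}\in H^\perp$.
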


 Y.~Meyer presents a proof of Theorem~\ref{thm:K} in \cite[p.~42]{Meyer1972} by means of the Bohr compactification. Chapter~III of Cassels' book \cite{Cassels1957} is devoted to a proof  in the classical setting \(G=\mathbb{T}^n\). We point out, however, that for the simultaneous Diophantine approximation of countably many real numbers, the Kronecker condition \eqref{eq:WC} is no longer sufficient. This motivates the introduction of harmonious sets by Meyer \cite{Meyer1972}; see also \cite{Meyer1970}. Related to this is the theory of model sets, which has been studied in depth and which furnishes mathematical models for quasicrystals in physics; see \cite{BG2013}.

We present an alternative proof of Theorem~\ref{thm:K} by introducing a \(\widehat{G}\)-action on the \(n\)-dimensional torus \(\mathbb{T}^n\) and recasting the underlying problem of Diophantine approximation in the language of dynamical systems. This approach leads naturally to a stronger equidistribution result of Weyl type; see Theorem~\ref{thm:W} below. Our initial motivation for this point of view arises from scientific computing \cite{JLZ2024,JZ2014,JZ2018}, where quasi-periodic functions are  realized as periodic functions on higher-dimensional tori.
We hope that our presentation here is direct and elementary.

To describe our framework and state the main result, it is convenient to identify the torus
\(\mathbb{T}=\mathbb{R}/\mathbb{Z}\) with the multiplicative group
\(\mathbb{S}=\{z\in\mathbb{C}: |z|=1\}\).
For notational simplicity, we continue to write \(\mathbb{T}\) for \(\mathbb{S}\), so that a typical point of \(\mathbb{T}\) is written as \(e^{2\pi i t}\) with \(t\in[0,1)\).
Then every character of \(\mathbb{T}^n\) is of the form
\[
   \chi_{\mathbf{u}}(\mathbf{z})=z_1^{u_1}\cdots z_n^{u_n},
   \qquad
   \mathbf{u}=(u_1,\dots,u_n)\in\mathbb{Z}^n.
\]

For each \(\gamma\in\widehat{G}\), we define a map \(\Phi_\gamma:\mathbb{T}^n\to\mathbb{T}^n\) by
\begin{equation}\label{eq:G^-action}
    \Phi_\gamma(\mathbf{z})
    =
    \bigl(z_1\gamma(g_1),\dots,z_n\gamma(g_n)\bigr),
    \qquad
    \mathbf{z}=(z_1,\dots,z_n)\in\mathbb{T}^n.
\end{equation}
It is the translation led by $(\gamma(g_1), \cdots, \gamma(g_n))$.
It is immediate that
\[
\Phi_{\gamma'\gamma''}=\Phi_{\gamma'}\circ\Phi_{\gamma''},
\]
and that the map
\[
(\gamma,\mathbf{z})\mapsto \Phi_\gamma(\mathbf{z})
\]
is continuous from \(\widehat{G}\times\mathbb{T}^n\) into \(\mathbb{T}^n\).
Thus \((\Phi_\gamma)_{\gamma\in\widehat{G}}\) defines a continuous action of \(\widehat{G}\) on \(\mathbb{T}^n\).

As usual, for \(\mathbf{z}\in\mathbb{T}^n\), we denote its orbit by
\[
{\rm Orb}(\mathbf{z})
:=
\{\Phi_\gamma(\mathbf{z}):\gamma\in\widehat{G}\}.
\]
In particular, if \(\mathbf{1}=(1,\dots,1)\) denotes the identity element of \(\mathbb{T}^n\), then
\({\rm Orb}(\mathbf{1})\) is a subgroup of \(\mathbb{T}^n\), and hence so is its closure
\[
H:=\overline{{\rm Orb}(\mathbf{1})}.
\]
Thus \eqref{eq:Gdioph} is solvable for every $\epsilon>0$ if and only if
\(
(e^{2\pi i\theta_1},\dots,e^{2\pi i\theta_n})\in H.
\)

The preceding Kronecker theorem tells us that \eqref{eq:Gdioph} is solvable if and only if  the Kronecker condition \eqref{eq:WC} is satisfied. We shall strengthen this statement by proving an equidistribution result of Weyl type.
 More precisely, we shall prove that, for every Riemann integrable function \(\varphi\) on \(H\) and every Følner sequence \((F_n)\) in \(\widehat{G}\), one has
\begin{equation}\label{eq:equiv}
    \lim_{n\to \infty} \frac{1}{|F_n|}\int_{F_n}\varphi(\Phi_\gamma(\mathbf{1})) \, d\gamma
    =
    \int_H \varphi(\mathbf{z})\, d\mathbf{z},
\end{equation}
where \(d\mathbf{z}\) denotes the Haar measure on \(H\), \(d\gamma\) the Haar measure on \(\widehat{G}\), and \(|F_n|\) the Haar measure of \(F_n\).
Here, by a Følner sequence, we mean an increasing sequence of compact subsets \((F_n)\) of \(\widehat{G}\) such that, for every \(a\in \widehat{G}\),
\[
\frac{|F_n \Delta (F_n+a)|}{|F_n|}\longrightarrow 0
\qquad\text{as } n\to\infty.
\]
Such sequences exist in every LCA group; see \cite{Greenleaf1973}. We refer to \S\ref{sect:UE} for further discussion of Følner sequences in general amenable locally compact groups.

The equidistribution formula \eqref{eq:equiv} follows from a description of the global dynamical behavior of the \(\widehat{G}\)-action on \(\mathbb{T}^n\) defined by \((\Phi_\gamma)_{\gamma\in\widehat{G}}\). More precisely, we establish the following uniquely ergodic decomposition.

\begin{thm}[Weyl type]\label{thm:W}
Consider the  $\widehat{G}$-action on $\mathbb{T}^n$ defined by $(\Phi_{\gamma})_{\gamma\in \widehat{G}}$ (cf. \eqref{eq:G^-action}).
Let $H=\overline{{\rm Orb}(\mathbf{1})}$.  The group $\mathbb{T}^n$ is decomposed into cosets $\mathbf{z}H$ with $\mathbf{z}\in \mathbb{T}^n$
and the following hold
\begin{itemize}
\item[(a)] The   $\widehat{G}$-action $(\Phi_{\gamma})_{\gamma\in \widehat{G}}$ restricted on each coset $\mathbf{z}H$ is uniquely ergodic. 
\item[(b)] Each coset  $\mathbf{z}H$ is a minimal invariant set.
\item[(c)]
For any continuous function $f: H \to \mathbb{C}$ we have
$$
  \forall z\in H, \quad    \lim_{n\to +\infty} \frac{1}{|F_n|}\int_{F_n} f(\Phi_\gamma(z)) d\gamma  = \int_H f( x) d\mathbf{m}_H(x)
$$
where $\mathbf{m}_H$ is the normalized Haar measure of $H$.
\item[(d)] The  $\widehat{G}$-action on $\mathbb{T}^n$ is uniquely ergodic if and only if the following Kronecker condition is satisfied: 
\begin{equation}\label{eq:UEC}
   (u_1, \cdots, u_n) \in \mathbb{Z}^n, \ \ \ \sum_{j=1}^n u_j g_j =0 \in G\ \ \  \Longrightarrow \ \ \ u_1=\cdots =u_n=0.
\end{equation}

\end{itemize} 
\end{thm}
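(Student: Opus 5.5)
The plan is to reduce everything to harmonic analysis on the compact abelian group $H$, using the fact that $\Phi_\gamma$ is translation by the element $\tau(\gamma):=(\gamma(g_1),\dots,\gamma(g_n))$. The map $\tau:\widehat{G}\to\mathbb{T}^n$ is a continuous homomorphism, and $H=\overline{\tau(\widehat G)}$. Every coset $\mathbf{z}H$ is invariant, since $\Phi_\gamma(\mathbf{z}h)=\mathbf{z}\,h\tau(\gamma)$. Translation by $\mathbf{z}$ conjugates the action on $H$ to the action on $\mathbf{z}H$. It therefore suffices to treat $H$ itself.

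\textbf{Key computation.} For a character $\chi$ of $H$, the function $\gamma\mapsto\chi(\tau(\gamma))$ is a continuous character of $\widehat G$. We split into two cases.
\begin{itemize}
\item If $\chi\circ\tau\equiv1$, then $\chi=1$ on the dense set $\tau(\widehat G)$, hence $\chi$ is trivial on $H$.
\item If $\chi\circ\tau$ is a nontrivial character $\psi$ of $\widehat G$, pick $a$ with $\psi(a)\neq1$. Then
\[
(1-\psi(a))\int_{F_n}\psi\,d\gamma=\int_{F_n}\psi-\int_{F_n+a}\psi ,
\]
which is bounded in modulus by $|F_n\Delta(F_n+a)|$. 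Hence $\frac1{|F_n|}\int_{F_n}\psi\to0$ by the Følner property.
\end{itemize}
Consequently, for $f=\chi$ and any $z\in H$, the averages in (c) equal $\chi(z)\frac1{|F_n|}\int_{F_n}\chi(\tau(\gamma))\,d\gamma$. These tend to $\int_H\chi\,d\mathbf m_H$, which is $1$ or $0$ according as $\chi$ is trivial or not.

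\textbf{Proof of (c).} By Stone--Weierstrass (or Peter--Weyl), trigonometric polynomials on $H$ are uniformly dense in $C(H)$; restrictions of the $\chi_{\mathbf u}$ already suffice. The averaging operators have norm $\le1$, so a $3\epsilon$ argument extends the convergence to all continuous $f$, uniformly in $z$.

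\textbf{Proof of (a).} If $\mu$ is an invariant probability measure on $H$, integrate the averages of $f\circ\Phi_\gamma$ against $\mu$. Invariance together with Fubini gives $\int f\,d\mu$ on the left. Dominated convergence applied to (c) gives $\int f\,d\mathbf m_H$ on the right. Hence $\mu=\mathbf m_H$. (Alternatively, one could invoke the characterization of unique ergodicity for amenable actions that the paper mentions, but the direct argument suffices.)

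\textbf{Proof of (b).} Given $z\in H$, suppose the orbit closure of $z$ missed a nonempty open set $U\subset H$. Take a continuous $f\ge0$ supported in $U$ with $\int f\,d\mathbf m_H>0$. The averages of $f$ along the orbit of $z$ vanish identically, contradicting (c). (More directly, $\overline{z\tau(\widehat G)}=zH$.) The same transfer by translation handles every coset $\mathbf zH$.

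\textbf{Proof of (d).} By (a), the action on $\mathbb T^n$ is uniquely ergodic if and only if there is only one coset, i.e. $H=\mathbb T^n$. Indeed, if $H\ne\mathbb T^n$, the Haar measures of two distinct cosets are two different invariant measures. Next, $H=\mathbb T^n$ if and only if no nontrivial character $\chi_{\mathbf u}$ annihilates $H$, since a proper closed subgroup has a nontrivial annihilator. Now $\chi_{\mathbf u}\circ\tau(\gamma)=\gamma\bigl(\sum_j u_jg_j\bigr)$. By Pontryagin duality, characters separate points of $G$, so this is identically $1$ if and only if $\sum_j u_jg_j=0$. This gives exactly \eqref{eq:UEC}. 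The same annihilator description, $H^\perp=\{\mathbf u:\sum_j u_jg_j=0\}$, also yields Theorem~\ref{thm:K}.

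\textbf{Main obstacle.} I expect the delicate point to be the vanishing of Følner averages of nontrivial characters, including measurability and the use of $F_n+a$ versus $a+F_n$; in an abelian group these coincide. Separately, the point-separation step in (d) relies on Pontryagin duality for $G$, which has to be invoked explicitly.
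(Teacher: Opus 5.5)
Your proof is correct, but it runs in the opposite logical direction from the paper's.

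\textbf{The paper's route.} The paper proves (a) first, with no averaging at all. By Lemma~\ref{lem:Fourier}, an invariant $\mu$ on $H$ satisfies $\widehat{\mu}(\chi_{\mathbf u})=\gamma\bigl(\sum_j u_jg_j\bigr)\widehat{\mu}(\chi_{\mathbf u})$ for every $\gamma$. Combining this with $\widehat H=\mathbb Z^n/H^\perp$, Lemma~\ref{lem:Hperp} and point separation, all nontrivial Fourier coefficients of $\mu$ vanish. (Alternatively, Li's remark: $\mu$ is invariant under a dense set of translations, hence is Haar measure.) The paper then obtains (c) by invoking the general characterization of unique ergodicity for amenable actions, Theorem~\ref{thm:UE}. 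It obtains (b) the same way, through its Riemann-integrable version applied to indicators of balls.

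\textbf{Your route.} You prove (c) directly by a Weyl-criterion argument. First, the translation trick shows that the F{\o}lner averages of the character $\gamma\mapsto\chi(\Phi_\gamma(\mathbf 1))$ of $\widehat G$ vanish when that character is nontrivial; this is exactly Li's second proof of Theorem~\ref{thm:group}. Then Stone--Weierstrass density of the restrictions $\chi_{\mathbf u}|_H$ and contractivity of the averaging operators extend this to all of $C(H)$. From (c) you read off (a) by integrating against an invariant measure, and (b) from positivity of Haar measure on open sets.

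\textbf{What each route buys.} Your route is self-contained. It bypasses Theorem~\ref{thm:UE} and the Portmanteau step, and never needs the identification $\widehat H=\mathbb Z^n/H^\perp$. It gives uniformity in $z$ directly and uses Pontryagin duality only in (d), which is essentially the paper's own argument. The cost is that the Bohr orthogonality of Theorem~\ref{thm:group} becomes an ingredient of Theorem~\ref{thm:W} rather than a corollary of it. The paper's ordering, in turn, establishes unique ergodicity independently of any F{\o}lner sequence. It also presents Theorem~\ref{thm:W} as an instance of a general principle for amenable group actions.
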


Let us consider the special case \(G=\mathbb{R}^d\) (\(d\ge 1\)). Then \(\widehat{G}=\mathbb{R}^d\). In this setting, following \cite{Cassels1957}, we consider \(n\) linear forms on \(\mathbb{R}^d\),
\[
L_j(\mathbf{x})=\mathbf{\ell}_j\cdot \mathbf{x},
\qquad
1\le j\le n,
\]
where ``\(\cdot\)'' denotes the Euclidean inner product. Note that each \(\mathbf{\ell}_j\) is an element of the group \(\mathbb{R}^d\). The vectors \(\mathbf{\ell}_1,\dots,\mathbf{\ell}_n\) play the role of the group elements \(g_1,\dots,g_n\) in the definition of the action \eqref{eq:G^-action}. Indeed, every \(\mathbf{x}\in\mathbb{R}^d\) defines a character of \(\mathbb{R}^d\) by
\[
\gamma_{\mathbf{x}}(\mathbf{y})=e^{2\pi i \mathbf{x}\cdot \mathbf{y}}.
\]
Hence
\[
\gamma_{\mathbf{x}}(\mathbf{\ell}_j)=e^{2\pi i \mathbf{\ell}_j\cdot \mathbf{x}},
\]
and these quantities are precisely those appearing in the Diophantine approximation problem; cf.\ \eqref{eq:Gdioph}. See \cite[Chap.~III, Theorem~IV]{Cassels1957} for Kronecker's theorem in this setting.
In this case, the formula \eqref{eq:G^-action} defines an \(\mathbb{R}^d\)-flow on \(\mathbb{T}^n\). By Theorem~\ref{thm:W}, this flow is uniquely ergodic if and only if the vectors \(\mathbf{\ell}_1,\dots,\mathbf{\ell}_n\) are \(\mathbb{Q}\)-linearly independent. This criterion for unique ergodicity was proved by a direct argument in \cite{FJZ2025}. Such unique ergodicity is crucial in the representation of quasiperiodic functions by periodic functions; see \cite{FJZ2025}. We also refer to \cite{JLZ2024} for numerical analysis of quasiperiodic systems based on such representations.

Similarly, when \(G=\mathbb{T}^d\) (\(d\ge 1\)), we have \(\widehat{G}=\mathbb{Z}^d\). In this case, the linear forms
\(
L_j(\mathbf{x})=\mathbf{\ell}_j\cdot \mathbf{x},
\  1\le j\le n,
\)
induce a \(\mathbb{Z}^d\)-action on \(\mathbb{T}^n\). The quantities
\(
\gamma_{\mathbf{x}}(\mathbf{\ell}_j)=e^{2\pi i \mathbf{\ell}_j\cdot \mathbf{x}}
\)
then arise naturally in the corresponding Diophantine approximation problem, with \(\mathbf{x}\in\mathbb{Z}^d\); cf.\ \eqref{eq:Gdioph}. By Theorem~\ref{thm:W}, this \(\mathbb{Z}^d\)-action is uniquely ergodic if and only if, for every \((u_1,\dots,u_n)\in\mathbb{Z}^n\), 
\[
\sum_{j=1}^n u_j \mathbf{\ell}_j =0 \mod \mathbb{Z}^d\quad 
\Longrightarrow \quad
u_1=\cdots=u_n=0.
\]
This condition is stronger than the \(\mathbb{Q}\)-linear independence of the vectors \(\mathbf{\ell}_1,\dots,\mathbf{\ell}_n\). This criterion of unique ergodicity for $\mathbb{Z}^d$-actions  was also established by a direct argument in \cite{FJZ2025}.

In both of the above cases, namely \(\widehat{G}=\mathbb{R}^d\) or \(\widehat{G}=\mathbb{Z}^d\), one may think of \(d\) as the ``dimension of time", or equivalently, as the number of independent time directions. When \(d=1\), the \(\widehat{G}\)-action reduces to a translation action, whose unique ergodicity has been extensively studied; see, for instance, \cite{Furst1981,Glasner2003}.

Now consider the conjugacy question of two minimal $\widehat{G}$-actions.
Let
\[
g=(g_1,\dots,g_n)\in G^n, \qquad h=(h_1,\dots,h_n)\in G^n.
\]
For each \(\gamma\in \widehat G\), consider the following translations of
\(
\mathbb T^n:=(\mathbb S^1)^n
\)
:
\[
\Phi^g_\gamma(z_1,\dots,z_n)
=
\bigl(z_1\gamma(g_1),\dots,z_n\gamma(g_n)\bigr),
\]
and
\[
\Phi^h_\gamma(z_1,\dots,z_n)
=
\bigl(z_1\gamma(h_1),\dots,z_n\gamma(h_n)\bigr).
\]
Recall that the operation in $\mathbb{T}^n$ is multiplicatively  written: $$
z\cdot z' =(z_1z_1', \cdots, z_nz_n')$$
for $z=(z_1, \cdots, z_n) \in \mathbb{T}^n$ and  $z'=(z_1', \cdots, z_n') \in \mathbb{T}^n$.
Also recall that the group of automorphisms of $\mathbb{T}^n$ is isomorphic to the group $GL(n,\mathbb Z)$ consisting of integral matrices with determinant equal to $1$ or $-1$. 

\begin{thm}\label{thm:conj-LCA-torus-actions}
Assume that both \(\widehat G\)-actions \((\Phi^g_\gamma)_{\gamma\in\widehat G}\) and
\((\Phi^h_\gamma)_{\gamma\in\widehat G}\) are minimal on \(\mathbb T^n\). Then the following are equivalent:
\begin{enumerate}
\item The two actions are topologically conjugate, i.e. there exists a homeomorphism
\(
H:\mathbb T^n\to \mathbb T^n
\)
such that
\(
H\circ \Phi^g_\gamma=\Phi^h_\gamma\circ H
\) for all
\( \gamma\in \widehat G.
\)
\item There exists a matrix \(P=(p_{ij})\in GL(n,\mathbb Z)\) such that
\begin{equation}\label{eq:Pg=h}
h_i=\sum_{j=1}^n p_{ij}g_j,
\qquad 1\le i\le n.
\end{equation}
\end{enumerate}
Moreover, every conjugacy \(H\) is affine: there exist \(P\in GL(n,\mathbb Z)\) and
\(c\in \mathbb T^n\) such that
\[
H(z)=c\cdot \Psi_P(z)
\qquad (z\in \mathbb T^n),
\]
where \(\Psi_P:\mathbb T^n\to \mathbb T^n\) is the torus automorphism defined by the matrix $P$:
\begin{equation}\label{eq:autom}
\Psi_P(z_1,\dots,z_n)
=
\left(
\prod_{j=1}^n z_j^{p_{1j}},
\ \dots,\
\prod_{j=1}^n z_j^{p_{nj}}
\right).
\end{equation}
The matrix $P$ verifying \eqref{eq:Pg=h}, namely $h=Pg$,  is unique.
\end{thm}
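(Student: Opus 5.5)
The direction (2)$\Rightarrow$(1) is a direct computation. Given $P\in GL(n,\mathbb Z)$ with $h=Pg$, the automorphism $\Psi_P$ of \eqref{eq:autom} is a homeomorphism of $\mathbb T^n$. For each $\gamma$, multiplicativity of $\Psi_P$ and of $\gamma$ gives
\[
\Psi_P\bigl(z\cdot(\gamma(g_1),\dots,\gamma(g_n))\bigr)=\Psi_P(z)\cdot\Bigl(\gamma\bigl(\textstyle\sum_j p_{1j}g_j\bigr),\dots\Bigr)=\Psi_P(z)\cdot(\gamma(h_1),\dots,\gamma(h_n)).
\]
Hence $\Psi_P\circ\Phi^g_\gamma=\Phi^h_\gamma\circ\Psi_P$.

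For (1)$\Rightarrow$(2) and the affine structure, let $H$ be a conjugacy and set $F(z):=H(z)\cdot H(\mathbf 1)^{-1}$. Then $F$ is again a conjugacy (translations commute with the $\Phi^h_\gamma$), and $F(\mathbf 1)=\mathbf 1$. By minimality, Theorem~\ref{thm:W}(b) with $H=\mathbb T^n$ shows that ${\rm Orb}^g(\mathbf 1)=\{a_\gamma:=(\gamma(g_1),\dots,\gamma(g_n))\}$ is dense in $\mathbb T^n$. Write $b_\gamma=(\gamma(h_1),\dots,\gamma(h_n))$. Applying the conjugacy relation at a point $z$ gives
\[
F(z\cdot a_\gamma)=F(z)\cdot b_\gamma .
\]
Taking $z=\mathbf 1$ gives $F(a_\gamma)=b_\gamma$, so $F(z a_\gamma)=F(z)F(a_\gamma)$ for all $z$ and $\gamma$. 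Now fix $z$ and let $a_\gamma\to w$ along a net, using density. Continuity of $F$ then yields $F(zw)=F(z)F(w)$ for all $z,w$. So $F$ is a continuous automorphism of $\mathbb T^n$, and hence $F=\Psi_P$ for some $P\in GL(n,\mathbb Z)$; the determinant is $\pm1$ because $F$ is bijective. Consequently $H(z)=c\cdot\Psi_P(z)$ with $c=H(\mathbf 1)$, which is the affine form claimed.

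To extract \eqref{eq:Pg=h}, use $\Psi_P(a_\gamma)=b_\gamma$. Componentwise this says $\gamma\bigl(\sum_j p_{ij}g_j-h_i\bigr)=1$ for every $\gamma\in\widehat G$. Since characters separate points of an LCA group (Pontryagin duality / Gelfand--Raikov), it follows that $h_i=\sum_j p_{ij}g_j$.

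For uniqueness, suppose $Pg=Qg=h$. Then $(P-Q)g=0$, i.e. each row $u$ of $P-Q$ satisfies $\sum_j u_jg_j=0$. Minimality of the $g$-action on $\mathbb T^n$ means $H=\mathbb T^n$, so the action is uniquely ergodic by Theorem~\ref{thm:W}(a). Theorem~\ref{thm:W}(d) then gives condition \eqref{eq:UEC}, which forces every row of $P-Q$ to vanish, so $P=Q$. The step requiring the most care is the passage from the orbit identity to the full homomorphism property. It relies essentially on density of the orbit of $\mathbf 1$, which is exactly the minimality hypothesis, together with continuity of $F$ (a net argument, since $\widehat G$ need not be metrizable, though $\mathbb T^n$ is, so a sequence suffices). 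The remaining steps are routine.
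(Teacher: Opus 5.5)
Your proposal is correct and follows essentially the paper's route: normalize the conjugacy to fix $\mathbf{1}$, use minimality and continuity to get the homomorphism property, identify the map as $\Psi_P$, read off $h=Pg$ from the fact that characters separate points, and get uniqueness from the absence of integer relations among the $g_j$. The differences are cosmetic. You pass to the limit along the dense orbit of $\mathbf{1}$ in the second variable, whereas the paper shows that the continuous invariant function $z\mapsto K(zw)K(z)^{-1}K(w)^{-1}$ is constant. For uniqueness, your appeal to Theorem~\ref{thm:W}(a),(d), or directly Lemma~\ref{lem:Hperp} with $\overline{{\rm Orb}(\mathbf{1})}=\mathbb{T}^n$, justifies the no-relation claim that the paper only asserts.
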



Now let us present,  as consequence of Theorem~\ref{thm:W},  the following fundamental property common to all locally compact Abelian groups.

\begin{thm}[Bohr orthogonality] \label{thm:group} Let $G$ be a LCA group with its dual group $\widehat{G}$. Let $(F_n)$ be a F{\o}lner sequence in $\widehat{G}$. Then
for any $g\in G\setminus\{0\}$ we have
\begin{equation}\label{eq:W0}
    \lim_{n\to \infty}\frac{1}{|F_n|} \int_{F_n} \gamma(g) d \gamma =0.
\end{equation}
\end{thm}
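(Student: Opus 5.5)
We apply Theorem~\ref{thm:W} with $n=1$ and $g_1=g$. The action of $\widehat{G}$ on $\mathbb{T}$ is then $\Phi_\gamma(z)=z\gamma(g)$. Taking $f(z)=z$, which is a continuous function on $H=\overline{\{\gamma(g):\gamma\in\widehat G\}}$, and the point $z=\mathbf{1}=1\in H$, Theorem~\ref{thm:W}(c) gives
\[
\lim_{n\to\infty}\frac{1}{|F_n|}\int_{F_n}\gamma(g)\,d\gamma=\int_H x\,d\mathbf{m}_H(x).
\]
It therefore suffices to show that $\int_H x\,d\mathbf{m}_H(x)=0$, that is, that the identity character $x\mapsto x$ restricted to the closed subgroup $H\subset\mathbb{T}$ is nontrivial.

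The standard fact we use is this: for a compact group $H$ and a continuous character $\chi$ of $H$, we have $\int_H\chi\,d\mathbf{m}_H=0$ unless $\chi\equiv 1$. Indeed, if $\chi(a)\neq1$, then translation invariance of Haar measure gives $\int\chi=\chi(a)\int\chi$, which forces $\int\chi=0$. So we only need some $x\in H$ with $x\neq 1$. For this it is enough to find a character $\gamma\in\widehat G$ with $\gamma(g)\neq 1$, since then $x=\gamma(g)\in {\rm Orb}(\mathbf 1)\subset H$.

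That step is exactly the point-separation property of $\widehat G$ on an LCA group: for $g\neq0$ there exists $\gamma\in\widehat G$ with $\gamma(g)\neq1$. This is the Gelfand--Raikov / Pontryagin duality theorem, which we invoke as a known result; it is the only nontrivial input, and it is also where the hypothesis $g\neq 0$ is used. (Alternatively, in the language of Theorem~\ref{thm:W}(d) with $n=1$: the Kronecker condition \eqref{eq:UEC} would require $ug=0\Rightarrow u=0$, which fails for torsion elements $g$. Because of this we do not use (d) directly. Instead we use only that $H\neq\{1\}$, which covers torsion and non-torsion $g$ alike.)

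Combining these steps, the limit equals $\int_H x\,d\mathbf m_H(x)=0$, which proves \eqref{eq:W0}. The main obstacle is only the appeal to the separation of points by characters; everything else is a direct specialization of Theorem~\ref{thm:W}(c).
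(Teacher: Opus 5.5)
Your proposal is correct and follows the paper's first proof: specialize Theorem~\ref{thm:W} to $n=1$, $g_1=g$, apply part (c) with $\varphi(w)=w$ at $z=1$, and conclude because this character is nontrivial on $H$, so its Haar integral vanishes. You spell out the two facts the paper leaves implicit: that a nontrivial character has zero Haar integral, and that $H\neq\{1\}$ because characters separate points of $G$.
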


We refer to this property as the \emph{Bohr orthogonality} of the characters of \(\widehat{G}\), since it immediately implies that for any two distinct points \(x_1,x_2\in G\),
\[
\lim_{n\to\infty}\frac{1}{|F_n|}\int_{F_n}\gamma(x_1)\overline{\gamma(x_2)}\,d\gamma=0.
\]

It is worth pointing out the following corollary: for any Følner sequence \((F_n)\) in \(\mathbb{Z}^d\) and any \(x\in\mathbb{R}^d\) with
\(
x\neq 0 \mod \mathbb{Z}^d,
\)
we have
\[
\lim_{n\to\infty}\frac{1}{|F_n|}\sum_{\mathbf{k}\in F_n}e^{2\pi i \mathbf{k}\cdot x}=0.
\]
For particular Følner sequences such as \(([-n,n]^d\cap\mathbb{Z}^d)\), the sum
\(
\sum_{\mathbf{k}\in F_n}e^{2\pi i \mathbf{k}\cdot x}
\)
can be computed explicitly, and the above limit follows directly. By contrast, for a general Følner sequence, a direct proof by computation doesn't work. 

As a consequence of Theorem~\ref{thm:group}, we then obtain the following Wiener theorem for locally compact Abelian groups. For the classical Wiener theorem on $\mathbb{T}$, see \cite{Katznelson2004}.

\begin{thm}[Wiener] \label{thm:Wiener} Let $G$ be a LCA group with its dual group $\widehat{G}$. Let $(F_n)$ be a F{\o}lner sequence in $\widehat{G}$. 
We consider a regular finite complex measure $\mu$ on $G$.
\begin{enumerate}
\item For any $x\in G$, we have
\begin{equation}\label{eq:W1}
    \mu(\{x\})=
    \lim_{n\to \infty}\frac{1}{|F_n|} \int_{F_n} \widehat{\mu}(\gamma)\gamma(x) d \gamma.
\end{equation}
\item The discrete part of $\mu$ satisfies
\begin{equation}\label{eq:W2}
   \sum_{x\in G} |\mu(\{x\})|^2
   = \ \lim_{n\to \infty}\frac{1}{|F_n|} \int_{F_n} |\widehat{\mu}(\gamma)|^2 d \gamma.
\end{equation}
\end{enumerate}
\end{thm}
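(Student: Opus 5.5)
Both parts reduce to Theorem~\ref{thm:group} by Fubini. For (1), I write $\widehat{\mu}(\gamma)=\int_G \overline{\gamma(y)}\,d\mu(y)$. This is the convention that makes \eqref{eq:W1} come out right; with the opposite convention one replaces $\gamma(x)$ by $\overline{\gamma(x)}$. Since $F_n$ is compact with finite Haar measure, $\mu$ is finite, and the integrand is bounded and jointly continuous, Fubini gives
\[
\frac{1}{|F_n|}\int_{F_n}\widehat{\mu}(\gamma)\gamma(x)\,d\gamma
=\int_G \Bigl(\frac{1}{|F_n|}\int_{F_n}\gamma(x-y)\,d\gamma\Bigr)\,d\mu(y).
\]
The inner function $\psi_n(y)$ satisfies $|\psi_n(y)|\le 1$ and $\psi_n(x)=1$. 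For $y\neq x$ we have $x-y\neq 0$, so Theorem~\ref{thm:group} gives $\psi_n(y)\to 0$. Since $|\mu|$ is a finite measure, dominated convergence applies and the integral tends to $\int_G \mathbf{1}_{\{x\}}\,d\mu=\mu(\{x\})$.

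For (2), I write $|\widehat{\mu}(\gamma)|^2=\widehat{\mu}(\gamma)\overline{\widehat{\mu}(\gamma)}$ as a double integral over $G\times G$ against $\mu\otimes\overline{\mu}$. Applying Fubini again,
\[
\frac{1}{|F_n|}\int_{F_n}|\widehat{\mu}(\gamma)|^2\,d\gamma=\iint_{G\times G}\psi_n(x,y)\,d\mu(x)\,d\overline{\mu}(y),\qquad \psi_n(x,y)=\frac{1}{|F_n|}\int_{F_n}\gamma(x-y)\,d\gamma,
\]
up to the sign convention inside $\gamma(\pm(x-y))$. By Theorem~\ref{thm:group}, $\psi_n\to\mathbf{1}_{\Delta}$ pointwise, where $\Delta$ is the diagonal, and $|\psi_n|\le1$. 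Dominated convergence with respect to the finite measure $|\mu|\otimes|\mu|$ gives the limit $(\mu\otimes\overline{\mu})(\Delta)$.

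By Fubini once more, $(\mu\otimes\overline{\mu})(\Delta)=\int_G\overline{\mu}(\{x\})\,d\mu(x)$. The function $x\mapsto\overline{\mu(\{x\})}$ vanishes outside the countable set $A$ of atoms of $|\mu|$, so the integral equals $\sum_{x\in A}\overline{\mu(\{x\})}\,\mu(\{x\})=\sum_x|\mu(\{x\})|^2$.

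The main obstacle is justifying the measure theory when $G$ is not second countable. Borel sets of $G\times G$ need not be products, the diagonal must be shown measurable for the product measure, and joint measurability is needed for Fubini. I would handle this by using that $\mu$ is regular and finite, hence concentrated on a $\sigma$-compact set. Alternatively, I would work with the Radon product of the regular measures, for which bounded continuous integrands and the closed diagonal pose no problem, since $G$ is Hausdorff. Measurability of $\psi_n$ follows from its continuity in $(x,y)$, which comes from the joint continuity of $(\gamma,x)\mapsto\gamma(x)$ and the compactness of $F_n$.
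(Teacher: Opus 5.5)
Your proof is correct and follows the paper's argument: part (1) is identical, using Fubini, the bound $|\psi_n|\le 1$, Theorem~\ref{thm:group} for $y\neq x$, and dominated convergence. For part (2) the paper instead applies (1) to $\mu*\widetilde{\mu}$ at $x=0$, but since $(\mu*\widetilde{\mu})(\{0\})=(\mu\otimes\overline{\mu})(\Delta)$, your diagonal computation is the same argument written out directly, and it also checks the identity $(\mu\otimes\overline{\mu})(\Delta)=\sum_x|\mu(\{x\})|^2$ and the measure-theoretic points that the paper leaves implicit.
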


It is worth offering some comments now on the above two theorems. 
Schulte \cite{Schulte2021} proved the existence of the limit in \eqref{eq:W0}, but did not show that it vanishes for \(g\neq 0\).
His argument relies on the mean ergodic theorem for amenable groups (see \cite[Corollary~3.4]{Greenleaf1973}), which does not in general imply the vanishing of the limit, since the involved group action  need not be ergodic.
As we shall see, \eqref{eq:W0} can be established by exploiting the unique ergodicity of the \(\widehat{G}\)-action on the closed orbit of \(1\in\mathbb{T}\), where the action is given by
\[
\Phi_\gamma(z)=z\,\gamma(g), \qquad z\in\mathbb{T}.
\]
On the other hand, the formula \eqref{eq:W2} was obtained in \cite{Schulte2021} in a more general framework, which we now recall.

Let \(G\) be equipped with a \(\widehat{G}\)-action, and denote by \(\langle g,\gamma\rangle\) the action of \(\gamma\in\widehat{G}\) on \(g\in G\).
The usual duality pairing \(\gamma(g)\) is a particular example of such an action, and it is precisely the one considered in Theorem~\ref{thm:group}.
Now let \(\nu=(\nu_N)_{N\ge 1}\) be a sequence of probability measures on \(\widehat{G}\), called an \emph{averaging sequence}.
For \(g\in G\), we define
\[
c_\nu(g)=\lim_{N\to\infty}\int_{\widehat{G}}\langle g,\gamma\rangle\,d\nu_N(\gamma),
\]
whenever the limit exists.
If this limit exists for every \(g\in G\), the averaging sequence \(\nu\) is said to be \emph{good}.
If, in addition, \(c_\nu(g)=0\) for every nontrivial \(g\in G\) (while \(c_\nu(e_G)=1\) for the identity element \(e_G\in G\)), then \(\nu\) is said to be \emph{ergodic}.

In this terminology, Theorem~\ref{thm:group} asserts that, for the standard action of \(\widehat{G}\) on \(G\), the averaging sequence \((\nu_{F_n})\) associated with any Følner sequence \((F_n)\) is ergodic.
Here \(\nu_{F_n}\) denotes the normalized Haar measure on \(F_n\), namely
\[
\nu_{F_n}(B)=\frac{{\rm meas}(B\cap F_n)}{{\rm meas}(F_n)},
\]
where \({\rm meas}(\cdot)\) denotes the Haar measure on \(\widehat{G}\).
The formula \eqref{eq:W2} was proved in \cite{Schulte2021} for general ergodic averaging sequences.
We would like to point out that \eqref{eq:W1} also remains valid for every ergodic averaging sequence.
Moreover, the proofs of both \eqref{eq:W1} and \eqref{eq:W2} in that general setting are essentially identical to the argument for \((\nu_{F_n})\), which will be given later in \S\ref{sect:group}.
For the sake of simplicity, however, we state Theorem~\ref{thm:Wiener} only for the particular sequence \((\nu_{F_n})\).

The space of Bohr almost periodic functions admits an invariant mean, called Bohr mean; see \cite{Besicovitch1954,Bohr1947,Corduneanu1989,LZ1982,Neumann1934}.
Using Theorem~\ref{thm:group}, together with the fact that almost periodic functions are uniform limits of trigonometric polynomials
(the Bohr approximation theorem, which remains valid for all LCA groups; see \cite{Neumann1934}),
we obtain the following formula for the Bohr mean.

Let \(AP(G)\) denote the space of all complex-valued almost periodic functions on \(G\) in the sense of Bohr. 
It is well known that a continuous translation-invariant linear functional
\(\mathcal{M}:AP(G)\to\mathbb{C}\) such that $\mathcal{M}(1) =1$, and that it is uniquely characterized by these properties.
The existence of Bohr mean is a fundamental result in  Bohr's theory of almost periodic functions. 
The following theorem provides another proof of this fundamental result by defining the Bohr mean using directly F{\o}lner sequences.

\begin{thm}[Mean formula]
\label{thm:BM}
    For any almost periodic function $f\in AP(G)$, the following hold:
    \begin{itemize} 
    \item [{\rm (i)}\ ] \ The following limit exists and is independent of $g$:
     \begin{equation}\label{eq:MU}
\forall g\in G, \quad \mathcal{M}(f) = \lim_{n\to \infty}\frac{1}{|F_n|}\int_{F_n}f(x+g) dx
\end{equation}
where  $\{F_n\}$ is any given F{\o}lner sequence. 
 \item  [{\rm (ii)}\ ]  The limit in \eqref{eq:MU}  is independent of the choice of F{\o}lner sequence.
\item  [{\rm (iii)}]  The limit in \eqref{eq:MU} is uniform in $g\in G$.
\end{itemize}
\end{thm}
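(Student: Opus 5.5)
The plan is to reduce to trigonometric polynomials and then pass to uniform limits using the Bohr approximation theorem. One convention first: $\{F_n\}$ must be a Følner sequence in $G$, because the integration variable $x$ runs over $G$. By Pontryagin duality $G\cong\widehat{\widehat G}$, so we may apply Theorem~\ref{thm:group} with the roles of $G$ and $\widehat G$ exchanged. That is, for every character $\chi\in\widehat G$ with $\chi\neq 1$, and every Følner sequence $(F_n)$ in $G$,
\[
\lim_{n\to\infty}\frac{1}{|F_n|}\int_{F_n}\chi(x)\,dx=0 .
\]

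\textbf{Step 1 (trigonometric polynomials).} Let $p=\sum_{k=1}^m c_k\chi_k$ with distinct $\chi_k\in\widehat G$. Since $\chi_k(x+g)=\chi_k(x)\chi_k(g)$ and $|\chi_k(g)|=1$,
\[
\Bigl|\frac{1}{|F_n|}\int_{F_n}p(x+g)\,dx-c_{0}\Bigr|\le\sum_{\chi_k\ne 1}|c_k|\,\Bigl|\frac{1}{|F_n|}\int_{F_n}\chi_k(x)\,dx\Bigr| ,
\]
where $c_0$ is the coefficient of the trivial character, or $0$ if it does not occur. The right-hand side is independent of $g$. By the dual form of Theorem~\ref{thm:group} it tends to $0$. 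So for polynomials the limit exists, equals $c_0$, is uniform in $g$, and is independent of the Følner sequence. This gives (i)--(iii) for $p$, with $\mathcal M(p)=c_0$.

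\textbf{Step 2 (uniform approximation).} Let $f\in AP(G)$ and $\varepsilon>0$. By the Bohr approximation theorem \cite{Neumann1934}, choose a trigonometric polynomial $p$ with $\|f-p\|_\infty<\varepsilon$. Write
\[
A_n(f,g)=\frac{1}{|F_n|}\int_{F_n}f(x+g)\,dx .
\]
Then $|A_n(f,g)-A_n(p,g)|\le\varepsilon$ for all $n$ and $g$. Combined with Step 1 this gives
\[
\sup_g|A_n(f,g)-A_{n'}(f,g)|\le 2\varepsilon+o(1),
\]
so $A_n(f,\cdot)$ is uniformly Cauchy. Hence the limit $\mathcal M(f)$ exists uniformly in $g$, which is (iii). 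Moreover $|\mathcal M(f)-\mathcal M(p)|\le\varepsilon$. Since $\mathcal M(p)$ depends neither on $g$ nor on the Følner sequence, the same holds for $\mathcal M(f)$ up to $2\varepsilon$, for every $\varepsilon$. This gives (i) and (ii).

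\textbf{Consequences and the main obstacle.} Linearity, $\mathcal M(1)=1$, continuity ($|\mathcal M(f)|\le\|f\|_\infty$) and translation invariance (from the independence of $g$) all follow. So this also reproves the existence of the Bohr mean. The main obstacle is not the $\varepsilon/3$ argument, which is routine. It is the dual use of Theorem~\ref{thm:group}, and the fact that $f(\cdot+g)$ with $f$ merely continuous is integrable over the compact set $F_n$. The first requires checking that Følner sequences in $G$ are available, which holds because $G$ is amenable, and that the Haar integrals are well defined. If one prefers to avoid the duality, one can instead apply Theorem~\ref{thm:W}(c) directly to the $G$-action $z\mapsto z\chi(x)$ on $\overline{\chi(G)}\subset\mathbb T$. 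This is a proper closed subgroup, i.e.\ a finite cyclic group, or all of $\mathbb T$. Integrating the identity function over its Haar measure gives $0$ whenever $\chi\ne1$.
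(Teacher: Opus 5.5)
Your proof is correct and follows essentially the same route as the paper. Both arguments apply Bohr orthogonality (Theorem~\ref{thm:group}), with $G$ regarded as the dual of $\widehat G$, to settle the case of trigonometric polynomials, and then pass to general $f\in AP(G)$ by uniform approximation via the Bohr approximation theorem.

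The differences are minor. You use a uniform Cauchy argument where the paper extracts a limit point of the constant terms $a_m$. Your bound $\sum_{\chi_k\ne 1}|c_k|\,\bigl|\frac{1}{|F_n|}\int_{F_n}\chi_k\bigr|$ does not depend on $g$ because $|\chi_k(g)|=1$, and this makes the uniformity in (iii) more explicit than the paper's remark about replacing $F_n$ by $F_n-g$.
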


The formula \eqref{eq:MU} holds for weakly almost periodic function on discrete amenable groups (see \cite{KL2016},  Proposition D.17 in Appendix D).

A key ingredient in the proof of our main result, Theorem~\ref{thm:W}, is a characterization of unique ergodicity for actions of amenable groups on compact metric spaces; this will be developed in Section~\ref{sect:UE} (see Theorem \ref{thm:UE}). Theorem~\ref{thm:K},  Theorem~\ref{thm:W}  and Theorem~\ref{thm:conj-LCA-torus-actions} concerning the $\widehat{G}$-actions are proved in Section~\ref{sect:KW} where some concrete examples are examined. Section~\ref{sect:group} is devoted to applications of Theorem~\ref{thm:W} to harmonic analysis on locally compact Abelian groups, including the proofs of the Bohr orthogonality of group characters (Theorem~\ref{thm:group}), the generalized Wiener theorem (Theorem~\ref{thm:Wiener}), and the formula for the Bohr mean (Theorem~\ref{thm:BM}). The last section is devoted to an application to  the numerical solution of eigenproblem of Schr\"odinger equation.
\medskip

{\em Acknowledgement.}  The author thanks Kai Jiang, Hanfeng Li, Yves Meyer, Herv\'e Queff\'elec, Ruxi Shi and Benjamin Weiss for their interests, questions and informations. 
This work is supported by the NSFC (No. 12231013, No. 12571205).

\section{Unique ergodicity of the action of amenable groups}
\label{sect:UE}

Let \(G\) be a  locally compact group with Haar measure \(m_G\), not necessarily abelian.
A continuous action of \(G\) on a compact metric space \(X\) is given by a homomorphism
\(
\Phi:G\to {\rm Homeo}(X),
\)
where \({\rm Homeo}(X)\) denotes the group of homeomorphisms of \(X\). In other words,
\[
\Phi_{g'}\circ \Phi_g=\Phi_{g'g}
\qquad
\text{for all } g,g'\in G.
\]
We shall write \(g\cdot x\) for \(\Phi_g(x)\), so that
\[
g'\cdot (g\cdot x)=(g'g)\cdot x.
\]
The action is said to be continuous if the map
\(
(g,x)\mapsto g\cdot x
\)
is continuous on $G\times X$.

Let \(C(X)\) denote the Banach space of continuous complex-valued functions on \(X\), equipped with the supremum norm \(\|\cdot\|_\infty\). By the Riesz representation theorem, the dual space \(C(X)^*\) may be identified with the space \(M(X)\) of finite Borel measures on \(X\). We denote by \(M_1^+(X)\) the convex set of Borel probability measures on \(X\), and by \(M_G(X)\) the set of \(G\)-invariant probability measures. A measure \(\mu\in M_1^+(X)\) is said to be \(G\)-invariant if
\[
(\Phi_g)_*\mu=\mu
\qquad
\text{for all } g\in G,
\]
where \((\Phi_g)_*\mu\) denotes the pushforward of \(\mu\) under \(\Phi_g\).

A continuous action of a group \(G\) on a compact space need not admit any invariant probability measure; see \cite{EW2011}.
Such a classical example is the action of the discrete group \({\rm SL}_2(\mathbb{Z})\) on the projective line \(\mathbb{P}^1(\mathbb{R})\); see again \cite{EW2011}. This action is given by
\[
\begin{pmatrix}
a & b\\
c & d
\end{pmatrix}
:[x,y]\mapsto [ax+by,cx+dy].
\]
By contrast, every continuous action of an amenable group on a compact space admits an invariant probability measure.

Recall that a  locally compact group \(G\) is said to be amenable if, for every compact set \(K\subset G\) and every \(\varepsilon>0\), there exists a measurable set \(F\subset G\) with compact closure such that \(KF\) is measurable and
\[
m_G(F\Delta KF)\le \varepsilon\, m_G(F),
\]
where
\[
KF=\{kf:k\in K,\ f\in F\}.
\]
Such a set \(F\) is called \((K,\varepsilon)\)-invariant. When \(\varepsilon=0\), this reduces to the usual notion of \(K\)-invariance.

A sequence \((F_n)_{n\ge 1}\) of compact subsets of \(G\) is called a {\em Følner sequence} if, for every compact set \(K\subset G\) and every \(\varepsilon>0\), all but finitely many \(F_n\) are \((K,\varepsilon)\)-invariant. Associated with such a sequence, one defines the averaging operators \(A_n\) on \(C(X)\) by
\begin{equation}\label{eq:An}
A_n\phi(x)=\frac{1}{m_G(F_n)}\int_{F_n}\phi(g\cdot x)\,dm_G(g),
\qquad \phi\in C(X).
\end{equation}
Amenability of \(G\) is equivalent to the existence of a Følner sequence. In particular, every locally compact Abelian group is amenable; see \cite[p.~113]{Pier1984}. We refer to \cite{EW2011,Glasner2003} for further background on actions of amenable groups.

We now characterize those actions of amenable groups that admit a unique invariant probability measure, that is, the uniquely ergodic actions.
For each \(g\in G\), define the operator \(T_g:C(X)\to C(X)\) by
\[
T_g\phi(x)=\phi(g\cdot x).
\]
This is the usual Koopman operator associated with the action. We then introduce
\begin{equation}\label{eq:IB}
\mathcal{I}:=\bigcap_{g\in G} {\rm Ker} (T_g-I),
\qquad
\mathcal{B}:=\bigcup_{g\in G}{\rm Im} (T_g-I),
\end{equation}
where \({\rm Ker} (T_g-I)\) and \({\rm Im} (T_g-I)\) denote, respectively, the kernel and the image of the operator \(T_g-I\), with \(I\) denoting the identity operator on \(C(X)\).
Clearly, \(\mathcal{I}\) is a closed subspace of \(C(X)\), consisting precisely of the \(G\)-invariant functions. By contrast, \(\mathcal{B}\) is in general not a subspace; we therefore write \({\rm Span}(\mathcal{B})\) for the linear subspace generated by \(\mathcal{B}\).

We now state a characterization of uniquely ergodic \(G\)-actions.

\begin{thm}
\label{thm:UE}
	Let $G$ be a  $\sigma$-compact and locally compact group action on a compact metric space $X$. Suppose that $G$ is amenable and 
	$(F_n)_{n\ge 1}$ is a F{\o}lner sequence.
	Then  the following are
	equivalent:
    \begin{enumerate}
    	\item for all $ \phi \in C(X)$ and all $x \in X$, 
    	$\lim_{n\to \infty} A_n \phi(x) $ exists and is independent of $x$;
    	\item the above limit is uniform in $x$ for every $\phi \in C(X)$;
    	\item we have the direct sum decomposition $C(X) = \mathbb{C} \oplus \overline{{\rm Span}(\mathcal{B})}$;
	\item  the action of $G$ on $X$ is uniquely ergodic;
 \item the conclusion (1) holds for Riemann integrable functions.
    	\end{enumerate}
\end{thm}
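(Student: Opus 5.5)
The plan is to prove the cycle $(2)\Rightarrow(1)\Rightarrow(4)\Rightarrow(3)\Rightarrow(2)$ and then handle (5) separately. The implication $(2)\Rightarrow(1)$ is trivial.

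\emph{Step $(1)\Rightarrow(4)$.} Write $L(\phi)$ for the common limit. Then $L$ is a positive linear functional on $C(X)$ with $L(1)=1$, so by Riesz it is given by a probability measure $\nu$. Now let $\mu\in M_G(X)$ be arbitrary. Invariance together with Fubini gives $\int A_n\phi\,d\mu=\int\phi\,d\mu$. Since $|A_n\phi|\le\|\phi\|_\infty$, dominated convergence yields $\int\phi\,d\mu=L(\phi)$. Hence $\mu=\nu$. Existence of an invariant measure is known for amenable $G$, so the invariant measure is unique.

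\emph{Step $(4)\Rightarrow(3)$.} Let $\mu$ be the unique invariant measure. Every $\psi=T_g\phi-\phi$ satisfies $\int\psi\,d\mu=0$, so $\overline{{\rm Span}(\mathcal B)}\subset\ker\mu$, and consequently $\mathbb C\cap\overline{{\rm Span}(\mathcal B)}=\{0\}$. For the reverse inclusion I use Hahn--Banach. Suppose $\Lambda\in C(X)^*=M(X)$ vanishes on $\mathcal B$. Then $\Lambda$ is a $G$-invariant complex measure. Its real and imaginary parts are invariant, and so are the Jordan parts $\Lambda^\pm$, because $(\Phi_g)_*$ preserves the total variation norm and the Jordan decomposition is unique. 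After normalizing, each of these is a multiple of $\mu$, so $\Lambda=c\mu$. Therefore $\overline{{\rm Span}(\mathcal B)}=\ker\mu$, which has codimension one, and $C(X)=\mathbb C\oplus\ker\mu$.

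\emph{Step $(3)\Rightarrow(2)$.} The operators $A_n$ satisfy $\|A_n\|\le1$, so it suffices to check uniform convergence on a dense set. Constants are fixed by $A_n$. For $\psi=T_h\phi-\phi$ I compute
\[
A_n\psi(x)=\frac{1}{m_G(F_n)}\Bigl(\int_{F_nh}-\int_{F_n}\Bigr)\phi(g\cdot x)\,dm_G(g),
\]
which is bounded by $\|\phi\|_\infty\, m_G(F_nh\,\Delta\,F_n)/m_G(F_n)$. This tends to $0$ uniformly in $x$. The point needing care is which side the Følner invariance acts on: the definition uses $KF$, while here the set is $F_nh$. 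If necessary I would rewrite $T_h\phi(g\cdot x)=\phi(gh\cdot x)$ using the right Haar measure and the modular function, or reorganize $\mathcal B$ so that the translation appears on the correct side. I expect this bookkeeping for non-abelian $G$ to be the main technical obstacle; for abelian $G$ it is immediate. Linearity extends the convergence to ${\rm Span}(\mathcal B)$, and an $\varepsilon/3$ argument extends it to the closure.

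\emph{Step for (5).} Clearly $(5)\Rightarrow(1)$. For the converse, take $\phi$ Riemann integrable with respect to $\mu$, meaning $\mu$ of its discontinuity set is zero. I would use the standard sandwich: for each $\varepsilon>0$ choose continuous $\phi_1\le{\rm Re}\,\phi\le\phi_2$ with $\int(\phi_2-\phi_1)\,d\mu<\varepsilon$, and do the same for the imaginary part. Monotonicity of $A_n$ and (2) applied to $\phi_1,\phi_2$ then give
\[
\limsup_n\bigl|A_n\phi(x)-\textstyle\int\phi\,d\mu\bigr|\le2\varepsilon
\]
for every $x$, which proves (5).
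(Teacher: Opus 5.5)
Your argument is correct, but its central implication is organized differently from the paper's.

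The paper first proves Theorem~\ref{thm:UE0}: pointwise convergence of $A_n\phi$ to a continuous limit $\Leftrightarrow$ uniform convergence $\Leftrightarrow$ $C(X)=\mathcal{I}\oplus\overline{{\rm Span}(\mathcal{B})}$. There, $(1)\Rightarrow(3)$ uses the limit projection $P\phi=\tilde\phi$ and a Hahn--Banach argument to show $\ker P=\overline{{\rm Span}(\mathcal{B})}$. The paper then proves $(4)\Rightarrow(1)$ by a compactness argument: two subsequences of averages with different limits would give two distinct weak-$*$ limit points of empirical measures, both invariant. It proves $(1)\Rightarrow(4)$ exactly as you do. For (5) it simply cites the Portmanteau theorem, and your sandwich argument is the relevant part of that theorem.

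You instead prove $(4)\Rightarrow(3)$ directly. Hahn--Banach reduces it to showing that every $G$-invariant complex measure is a multiple of $\mu$, which you get from invariance of the real, imaginary and Jordan parts. This identifies $\overline{{\rm Span}(\mathcal{B})}=\ker\mu$ outright. Uniform convergence then comes from the F{\o}lner estimate alone, so you never have to show in advance that the limit exists, nor extract weak-$*$ limits and check their invariance. The cost is the small Jordan-decomposition lemma. Your route is also tailored to unique ergodicity, whereas the paper's detour through Theorem~\ref{thm:UE0} also describes convergence of F{\o}lner averages when there are many invariant measures (with $\mathcal{I}$ in place of $\mathbb{C}$).

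Your $(3)\Rightarrow(2)$ is the paper's Steps 1--2 of Theorem~\ref{thm:UE0}, with one correction. Since $h\cdot(g\cdot x)=(hg)\cdot x$, left invariance of $m_G$ gives $A_n(T_h\phi)(x)=m_G(F_n)^{-1}\int_{hF_n}\phi(g\cdot x)\,dm_G(g)$, not an integral over $F_nh$. This is exactly what the $(\{h\},\varepsilon)$-invariance in the definition of a F{\o}lner sequence controls. So the left/right obstacle you anticipated does not arise, and no modular function is needed. The rewrite $T_h\phi(g\cdot x)=\phi(gh\cdot x)$ that you suggest would in fact be incorrect.
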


The proof of this theorem follows the same general line as the classical proof for \(\mathbb{Z}\)-actions; cf. \cite{Parry1981}. For completeness, we include the details.
Here, by a Riemann integrable function we mean a uniform limit of finite sums of the form
\[
\sum_k a_k\,1_{A_k},
\]
where each set \(A_k\) has boundary of \(\mu\)-measure zero; such sets are referred to as \(\mu\)-continuity sets in \cite[p.~15]{Billingsley1999}.

\subsection{Existence of the limit of averages}
Theorem \ref{thm:UE} will be a consequence of the following more general result, concerning the existence of the limit 
$\lim A_n\phi(x)$ for all continuous functions $\phi$.
Recall that $\mathcal{I}$ and $\mathcal{B}$ appearing in the following theorem are defined by \eqref{eq:IB}.
A proof of Theorem \ref{thm:UE0}, similar to the classical case (cf. \cite{Parry1981}, p.12), is presented for completeness.

\begin{thm}\label{thm:UE0}
	Let $G$ be a  $\sigma$-compact and locally compact group action on a compact metric space $X$. Suppose that $G$ is amenable and 
	$(F_n)_{n\ge 1}$ is a Folner sequence.
	Then  the following are
	equivalent:
    \begin{enumerate}
    	\item for all $ \phi \in C(X)$ and all $x \in X$, the limit exists:
    	$\lim_{n\to \infty} A_n\phi(x)   = \tilde{\phi}(x) \in C(X)$;
    	\item the above limit is uniform in $x$ for every $\phi\in C(X)$;
    	\item we have the direct sum decomposition $C(X) = \mathcal{I} \oplus \overline{{\rm Span}(\mathcal{B})}$.
    	\end{enumerate}
\end{thm}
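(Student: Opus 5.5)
The plan follows the classical mean ergodic argument for $\mathbb{Z}$-actions (cf. \cite{Parry1981}), with the Følner property standing in for telescoping sums. Two basic facts will be used throughout. First, each $A_n$ is a contraction on $C(X)$, i.e.\ $\|A_n\phi\|_\infty\le\|\phi\|_\infty$; this requires continuity of $(g,x)\mapsto g\cdot x$, which makes $A_n\phi$ continuous. Second, $A_n\psi=\psi$ for every $\psi\in\mathcal{I}$.

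The key estimate is that $\|A_n(T_h\phi-\phi)\|_\infty\to 0$ for every $h\in G$ and $\phi\in C(X)$. To see this, write
\[
A_nT_h\phi(x)=\frac{1}{m_G(F_n)}\int_{F_n}\phi((gh)\cdot x)\,dm_G(g).
\]
A change of variables turns this into an integral over $F_nh$, up to the modular function. Here I must be careful about which side the Følner condition acts on; the paper's condition is $m_G(F\Delta KF)$, a left condition. So the natural move is to express $A_n(T_h\phi-\phi)$ through a left translate $hF_n$, or equivalently to replace $T_h$ by $T_{h^{-1}}$. Either way the estimate becomes
\[
\|A_n(T_h\phi-\phi)\|_\infty\le \frac{m_G(F_n\Delta K F_n)}{m_G(F_n)}\,2\|\phi\|_\infty\to 0,
\]
with $K=\{h\}$ or $\{h^{-1}\}$. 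By linearity the same convergence holds on ${\rm Span}(\mathcal{B})$, and by the contraction property it extends to the closure $\overline{{\rm Span}(\mathcal{B})}$. Getting the left/right bookkeeping right is the main technical obstacle. If it fails for non-unimodular $G$, I would either define $\mathcal{B}$ through the appropriate side or restrict to the abelian case, which is all the later sections need.

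\emph{(3)$\Rightarrow$(2).} Write $\phi=\psi+\beta$ with $\psi\in\mathcal{I}$ and $\beta\in\overline{{\rm Span}(\mathcal{B})}$. Then $A_n\phi=\psi+A_n\beta\to\psi$ uniformly, and this also gives $\tilde\phi=\psi\in C(X)$.

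\emph{(2)$\Rightarrow$(1).} This is trivial.

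\emph{(1)$\Rightarrow$(3).} I will show $\tilde\phi\in\mathcal{I}$ and $\phi-\tilde\phi\in\overline{{\rm Span}(\mathcal{B})}$.
\begin{enumerate}
\item \emph{Invariance of $\tilde\phi$.} Note $A_n\phi(h\cdot x)=A_n(T_h\phi)(x)$. By the key estimate applied pointwise, $A_n(T_h\phi)(x)-A_n\phi(x)\to 0$, so $\tilde\phi(h\cdot x)=\tilde\phi(x)$. Here $T_h$ should be taken on the correct side, which follows from the same Følner computation.
\item \emph{Membership in the closed span.} I use Hahn--Banach. Suppose $\mu\in M(X)$ annihilates $\overline{{\rm Span}(\mathcal{B})}$. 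Then $\mu$ is $G$-invariant, so $\int A_n\phi\,d\mu=\int\phi\,d\mu$ by Fubini. Since $|A_n\phi|\le\|\phi\|_\infty$, the pointwise convergence $A_n\phi\to\tilde\phi$ and dominated convergence give $\int\tilde\phi\,d\mu=\int\phi\,d\mu$. Hence $\mu(\phi-\tilde\phi)=0$, and so $\phi-\tilde\phi\in\overline{{\rm Span}(\mathcal{B})}$.
\item \emph{Directness of the sum.} If $\psi\in\mathcal{I}\cap\overline{{\rm Span}(\mathcal{B})}$, then $\psi=A_n\psi\to 0$, so $\psi=0$.
\end{enumerate}
This establishes $C(X)=\mathcal{I}\oplus\overline{{\rm Span}(\mathcal{B})}$.

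Finally, $\sigma$-compactness is used only to guarantee that Følner sequences exist and that Fubini's theorem applies with respect to $m_G$.
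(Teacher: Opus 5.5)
Your outline follows the paper's proof. It has the same coboundary estimate as Step~2, then (3)$\Rightarrow$(2)$\Rightarrow$(1), a Hahn--Banach argument for the complement, and directness via $\psi=A_n\psi\to0$. Your Hahn--Banach step, applied directly to $\phi-\tilde\phi$, is valid in general and slightly cleaner than the paper's, because it does not need $P^2=P$.

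The gap is the invariance of $\tilde\phi$. The identity $A_n\phi(h\cdot x)=A_n(T_h\phi)(x)$ fails for non-abelian $G$. The left side is $\frac{1}{m_G(F_n)}\int_{F_n}\phi((gh)\cdot x)\,dm_G(g)$, an average over the right translate $F_nh$. By contrast, $A_n(T_h\phi)(x)=\frac{1}{m_G(F_n)}\int_{F_n}\phi((hg)\cdot x)\,dm_G(g)$ is an average over $hF_n$. Your displayed formula for $A_nT_h\phi$ has the product in the wrong order. With $(hg)\cdot x$ and left invariance of $m_G$, the key estimate holds exactly as in the paper, with $K=\{h\}$ and no modular function; so the side problem is not where you located it. The left F{\o}lner hypothesis only yields $\lim_n A_n(T_h\phi)(x)=\tilde\phi(x)$. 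Invariance of $\tilde\phi$ would need $F_nh$ to be close to $F_n$, which is a right F{\o}lner condition. That condition is not assumed, and it is impossible in a non-unimodular group. Taking $T_h$ ``on the correct side'' or redefining $\mathcal{B}$ cannot help, since $h\mapsto T_h$ and $h\mapsto T_{h^{-1}}$ produce the same $\mathcal{I}$ and the same $\mathcal{B}$.

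This is not repairable: for non-abelian $G$ the implication (1)$\Rightarrow$(3) can fail.
\begin{itemize}
\item Take $G=\{(a,b):a>0,\ b\in\mathbb{R}\}$ with $(a,b)(a',b')=(aa',ab'+b)$ and left Haar measure $a^{-2}\,da\,db$. Let $F_n=\{(a,b):1\le a\le e^n,\ |b|\le a\}$. This is F{\o}lner in the paper's sense, since $m_G(F_n)=2n$ and $m_G(F_n\Delta KF_n)=O_K(1)$.
\item Let $G$ act on the closed half-plane $\{(s,t):s\ge0\}$ by $(a,b)\cdot(s,t)=(s/a,\ t+bs/a)$. Let $X$ be its radial compactification, obtained by adjoining the half-circle of directions.
\item Under normalized Haar measure on $F_n$, $\log a/n$ and $b/a$ are independent and uniform on $[0,1]$ and $[-1,1]$. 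Hence $A_n\phi(s_0,t_0)\to\frac12\int_{-1}^1\phi(0,t_0+s_0\tau)\,d\tau$.
\item At a direction of slope $\sigma$, the image slope $a\sigma+b$ escapes to $\pm\infty$. One checks that (1) holds with $\tilde\phi\in C(X)$.
\item Now take $h=(1,1)$ and $x_0=(1,0)$, so $h\cdot x_0=(1,1)$. Then $\tilde\phi(x_0)$ and $\tilde\phi(h\cdot x_0)$ are the averages of $\phi(0,\cdot)$ over $[-1,1]$ and $[0,2]$ respectively. So $\tilde\phi\notin\mathcal{I}$, whereas (3) forces the limit to lie in $\mathcal{I}$.
\end{itemize}
The paper's proof simply asserts $P\phi\in\mathcal{I}$, so it has the same hole. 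For abelian $G$, which is all that is used later, $gh=hg$ makes your identity correct and your proof complete. The same holds if $(F_n)$ is also right F{\o}lner.
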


\begin{proof} The proof is composed of the following steps.

{\sc Step 1.}  {\it For $\phi\in \mathcal{I}$, the uniform convergence is obvious and $\tilde{\phi} =\phi$}.

{\sc Step 2.}  {\it For $\phi\in \overline{\mathcal{B}}$, the uniform convergence also holds and the limit is zero}. Indeed, first assume  $\phi \in B$. Then $\phi=T_{h}\psi-\psi$ for some $h\in G$ and some $\psi\in C(X)$.
From 
$$
   A_n \phi(x) = \frac{1}{m_G(F_n)} \left( \int_{h F_n} \psi(g\cdot  x) dm_G(x) -  \int_{F_n} \psi(g\cdot x) dm_G(x) \right)
$$
we get 
$$
    |A_n\phi(x)| \le \|\psi\|_\infty \frac{m_G(hF_n \Delta F_n)}{m_G(F_n)},
$$
which implies $\|A_n \phi\|_\infty \to 0$. By the linearity, we also have $\|A_n \phi\|_\infty \to 0$ for all $\phi\in {\rm Span}(\mathcal{B})$.
Now assume $\phi \in \overline{ {\rm Span}(\mathcal{B})}$. We approximate $\phi$ by elements $\phi'$ in  ${\rm Span}(\mathcal{B})$, with $\|\phi-\phi'\|\le \epsilon$ for any given $\epsilon >0$.
Since $A_n$ are contractions on $C(X)$, we have
$$
     \|A_n \phi\|_\infty  \le  \|A_n (\phi-\phi')\|_\infty +  \|A_n \phi'\|_\infty \le \epsilon +  \|A_n \phi'\|_\infty.
$$
Now we can conclude for {\sc Step 2}.


{\sc Step 3.}  {\it  From Steps 1 and 2, we get immediately $(3) \Rightarrow (2) \Rightarrow (1) $. It remains to prove $(1) \Rightarrow (3)$}. 
Assume (1). We define  $P: C(X) \to C(X)$ by $
     P(\phi) : = \tilde{\phi}.
$
Clearly, $P$ is linear, continuous and  projective i.e. $P^2 =P$. Any $\phi \in C(X)$ can be decomposed as follows: 
$$
     \phi = P\phi + (\phi - P\phi) \qquad {\rm with}  \   P\phi \in I, \  \phi-P\phi \in {\rm Ker}P.
$$ 
 Thus, we have checked that 
$C(X) = \mathcal{I} + {\rm Ker} P$. It is even a direct sum, because $\phi \in \mathcal{I} \cap {\rm Ker} P$ implies 
$\phi = P\phi = 0$. To finish the proof,  we have only to show
$$ {\rm Ker} P = \overline{{\rm Span}(\mathcal{B})}.$$
By Step 2,  ${\rm Span (\mathcal{B})} \subset  {\rm Ker} P$.
As ${\rm Ker} P$ is closed, we have 
$ \overline{{\rm Span}(\mathcal{B})} \subset {\rm Ker} P$.  
To prove the inverse inclusion $ {\rm Ker} P \subset \overline{\rm Span (\mathcal{B})}$,
by Hahn-Banach theorem, we have only to show the implication
$$
    \forall \mu \in \mathcal{M}(X), \mu|_{\overline{\rm Span (\mathcal{B})}} =0 \Rightarrow \mu|_{{\rm Ker} P} =0.
$$
Actually, we can prove
$$
\forall \mu \in \mathcal{M}(X), 
\forall \phi \in C(X), \forall g\in G, \langle \mu, T_g\phi-\phi \rangle =0
 \Rightarrow \forall \psi \in {\rm Ker} P, \langle \mu, \psi \rangle =0.
$$
Notice that $\langle \mu, T_g \phi-\phi \rangle =0$ ($\forall \phi \in C(T), \forall g\in G$)
means that the measure $\mu$ is $G$-invariant. On the other hand,
$\Phi \in {\rm Ker} P$ means
$$
    \forall x\in X, \ \ \  A_n\psi(x):=  \frac{1}{m_g(F_n)}\int_{F_n} \psi(g\cdot x) dm_G(g) \to 0.
    $$ 
Using Fubini's theorem and the invariance  $\langle \mu, \psi \circ T_g\rangle =  \langle \mu, \psi\rangle$, for all $n\ge 1$ we get
$$
    \langle \mu, \psi\rangle = 
    \frac{1}{m_G(F_n)}   \int_X \left( \int_{F_n} \psi(g\cdot x) dm_G(g) \right)d\mu(x) =  \langle \mu, A_n \psi\rangle.
$$
We conclude  the $ \langle \mu, \psi\rangle =0$, by using Lebesgue's dominated convergence theorem  and the fact $A_n \psi(x) \to 0$.

\end{proof}

\subsection{Unique ergodicity: Proof of Theorem \ref{thm:UE}}  
Having proved Theorem \ref{thm:UE0}, for the equivalence of (1)-(4), we have only two points to prove:
\vspace{0.01em}

{\it A. The unique ergodicity implies that $\lim A_n \phi(x)$ exists and is independent of $x$}.

We first claim that the limit $\lim A_n \phi(x)$ exists for all $\phi\in C(X)$ and all $x\in X$. Otherwise, there exist a function 
$\phi \in C(X)$, a point $z_0\in X$ and integers $n_j', n_j''$ such that 
$$
     A_{n_j'}\phi(z_0) \to a, \quad A_{n_j'}\phi(z_0) \to b \qquad {\rm with } \ \ a\not=b. 
$$
Now consider the probability measures 
$$
   \eta'_j = \frac{1}{m_G(F_{n_j'})} \int_F \delta_{\Phi_{x}(z_0)}dm_G(x), \qquad  \eta''_j = \frac{1}{m_G(F_{n_j''})} \int_F \delta_{\Phi_{x}(z_0)} dm_G(x).
$$
Assume, without loss of generality, that 
$$
      \eta_j' \to \eta', \qquad \eta_j'' \to \eta''.
$$
Both $\eta'$ and $\eta''$ are $\mathbb{R}^d$-invariant, but distinct because
$$
      \int \phi d\eta' =\lim \int \phi d\eta_{j}'= \lim A_{n_j'}\phi(z_0) =a \not=b=   \lim A_{n_j''}\phi(z_0)  =  \lim \int \phi d\eta_{j}''= \int \phi d\eta''.
$$
In a  similar way, we can prove that $\lim A_n \phi(x)$ is constant. 
\medskip

{\it B. That $\lim A_n \phi(x)$ is constant for every $\phi\in C(X)$ implies the unique ergodicity.}

Suppose that $\mu_1$ and $\mu_2$ are two distinct invariant measure. So, there exists a function $\phi \in C(X)$
such that 
\begin{equation}\label{eq:m1m2}\langle \mu_1, \phi\rangle \not= \langle \mu_2, \phi\rangle.
\end{equation}  By the hypothesis, for some constant $c_\phi$
   $$
    \forall z \in X, \quad \lim_{n\to \infty}A_n \phi(z)=c_f.
    $$
    Integrating with respect to $\mu_1$ which is invariant, we get $c_\phi= \langle \mu_1, \phi\rangle$. Similarly, 
 we also have  $c_\phi = \langle \mu_2, \phi\rangle$. These contradict \eqref{eq:m1m2}.

For the equivalence of (1) and (5), it suffices to call for Portmanteau theorem (cf. \cite{Billingsley1999}, p.16).

 \section{Theorems of Kronecker and Weyl,  Theorem of conjugacy
}\label{sect:KW}

We are going to prove Theorem \ref{thm:K},    Theorem \ref{thm:W} and Theorem \ref{thm:conj-LCA-torus-actions}. 
Some examples are examined later.

 \subsection{Proof of Theorem \ref{thm:K} }
 Recall that the annihilator  of a closed subgroup $H$ of $\mathbb{T}^n$ is defined to be the set $H^\perp$ consisting of all characters $\chi_\mathbf{u}$ of $\mathbb{T}^n$, represented by   $u \in \mathbb{Z}^n$,  such that 
$\chi_\mathbf{u}(\mathbf{h})=1$ for every $\mathbf{h}\in H$. Now assume $H=\overline{{\rm Orb}(\mathbf{1})}$,  the closure of the orbit 
\({\rm Orb}(\mathbf{1})\) of  the identity element \(\mathbf{1}=(1,\dots,1)\)  of \(\mathbb{T}^n\). As we have observed that  the inequality \eqref{eq:Gdioph} is solvable if and only if
\(
(e^{2\pi i\theta_1},\dots,e^{2\pi i\theta_n})\in H.
\)
To characterize the elements of $H$, we need to know the  annihilator  of $H$, which is given in the next lemma.

\begin{lemma} \label{lem:Hperp} $H^\perp =\{\mathbf{u}=(u_1, \cdots, u_n)\in \mathbb{Z}^n: u_1 g_1+\cdots+u_n g_n =0 \}$.
\end{lemma}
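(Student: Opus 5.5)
The plan is to compute the character $\chi_{\mathbf{u}}$ along the orbit of $\mathbf{1}$ and then pass to the closure by continuity. For $\gamma\in\widehat{G}$ we have $\Phi_\gamma(\mathbf{1})=(\gamma(g_1),\dots,\gamma(g_n))$. Since $\gamma$ is a homomorphism,
\[
\chi_{\mathbf{u}}(\Phi_\gamma(\mathbf{1}))=\prod_{j=1}^n\gamma(g_j)^{u_j}=\gamma\Bigl(\sum_{j=1}^n u_jg_j\Bigr).
\]
Because $\chi_{\mathbf{u}}$ is continuous and $H$ is the closure of ${\rm Orb}(\mathbf{1})$, we get $\chi_{\mathbf{u}}\in H^\perp$ if and only if $\chi_{\mathbf{u}}\equiv 1$ on ${\rm Orb}(\mathbf{1})$. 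By the display, this holds if and only if $\gamma(g)=1$ for every $\gamma\in\widehat{G}$, where $g:=\sum_j u_jg_j\in G$.

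One inclusion is now immediate. If $\sum_j u_jg_j=0$, then $\gamma(0)=1$ for all $\gamma$, so $\mathbf{u}\in H^\perp$.

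For the reverse inclusion, assume $\gamma(g)=1$ for all $\gamma\in\widehat{G}$; we must show $g=0$. This is exactly the statement that the characters of an LCA group separate points. It follows from the Gelfand--Raikov theorem, or equivalently from Pontryagin duality: the canonical map $G\to\widehat{\widehat{G}}$ is injective, indeed an isomorphism. I would cite this standard fact (e.g.\ Rudin, \emph{Fourier Analysis on Groups}, or Hewitt--Ross). It cannot be obtained from Theorem~\ref{thm:group}, since the paper derives that theorem from the present lemma via Theorem~\ref{thm:W}.

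This separation step is the only non-elementary ingredient, and hence the main obstacle; everything else is the one-line computation above together with continuity. With it, $\gamma(g)=1$ for all $\gamma$ forces $\sum_j u_jg_j=0$, and the two inclusions give the claimed description of $H^\perp$.
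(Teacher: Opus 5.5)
Your proposal is correct and follows essentially the same route as the paper: you evaluate $\chi_{\mathbf{u}}$ at $\Phi_\gamma(\mathbf{1})$ to get $\gamma(\sum_j u_j g_j)$, reduce membership in $H^\perp$ to the orbit by continuity, and conclude using the fact that characters of an LCA group separate points. You also make explicit two points the paper leaves implicit: the continuity step, and the fact that the separation property must come from Pontryagin duality (or Gelfand--Raikov) rather than from Theorem~\ref{thm:group}, which would be circular.
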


\noindent {\em Proof of Lemma \ref{lem:Hperp}.}  An element $\mathbf{u}=(u_1, \cdots, u_n)$ of $ \mathbb{Z}^n$ belongs to $H^\perp$ iff $\chi_\mathbf{u}(\mathbf{h}) =1$ for all $\mathbf{h}\in {\rm Orb}(\mathbf{1})$. As a typical point $\mathbf{h}$ in ${\rm Orb}(\mathbf{1})$ has the form 
$\Phi_\gamma(\mathbf{1}) = \gamma(g_1) \cdots \gamma(g_n)$, $\mathbf{u}\in H^\perp$ means 
$$
    \forall \gamma \in \widehat{G}, \quad       \gamma(g_1)^{u_1} \cdots \gamma(g_n)^{u_1}=1, \quad i.e. \quad \gamma (u_1g_1+ \cdots +u_ng_n)=1.
$$
As characters of $G$ separate points of $G$, this means $u_1g_1+ \cdots +u_ng_n$ is the unit of $G$.

 The system \eqref{eq:Gdioph} has a solution $\gamma \in \widehat{G}$ for every positive number $\epsilon>0$ just mean that 
$\mathbf{z}:=(e^{2\pi i\theta_1}, \cdots, e^{2\pi i \theta_n})$ belongs to the closed orbit $H$, the closure of ${\rm Orb}(\mathbf{1})$. If we consider the quotient group
$\mathbb{T}^n/H$, $\mathbf{z}\in H$ means that $\mathbf{z}$ represent the unit element of the quotient $\mathbb{T}^n/H$, which is characterized by
$$
\forall \gamma \in \widehat{\mathbb{T}^n/H}, \quad \gamma(\mathbf{z})=1.
$$ However,  it is well known that $\widehat{\mathbb{T}^n/H} = H^\perp$. So, by Lemma 
\ref{lem:Hperp},  $\mathbf{z} \in H$ iff for all $(u_1, \cdots, u_n)\in \mathbb{Z}^d$ such that $u_1g_1+\cdots u_n g_n=0$ we have $\chi_{\mathbf{u}}(\mathbf{z})=1$, namely
$$
      (e^{2\pi i \theta_1})^{u_1}\cdots  (e^{2\pi i \theta_n})^{u_n} =1, \ \ \ i.e. \ \ \  e^{2\pi i (u_1\theta_1 + \cdots + u_n \theta_n)}=1,
$$
in other words, $u_1\theta_1 + \cdots + u_n \theta_n$ is an integer.

 \subsection{Proof of Theorem \ref{thm:W} }
 
 The proof of Theorem \ref{thm:W} is essentially based on Theorem \ref{thm:UE} and the following lemma which compute the Fourier coefficients of the image measure under 
 $\Phi_\gamma$.
 
\begin{lemma} \label{lem:Fourier} For a  probability measure $\mu$  on $H$, the image measure  $\mu\circ \Phi_\gamma^{-1}$ has its Fourier coefficients equal to
$$
  \widehat{\mu\circ \Phi_\gamma^{-1}} (\chi_\mathbf{u}) 
  =   \gamma(u_1g_1+\cdots +u_n g_n) 
  \widehat{\mu}(\chi_\mathbf{u}).
$$
\end{lemma}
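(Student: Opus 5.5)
This is a direct computation from the definition of Fourier coefficients and the change-of-variables formula for image measures. For a probability measure $\mu$ on $H$ (viewed as a measure on $\mathbb{T}^n$ supported on $H$, which is invariant under each $\Phi_\gamma$), and for $\mathbf{u}\in\mathbb{Z}^n$, I take the Fourier coefficient to be
\[
\widehat{\mu}(\chi_{\mathbf{u}})=\int \chi_{\mathbf{u}}(\mathbf{z})\,d\mu(\mathbf{z}).
\]
If the paper's convention uses $\overline{\chi_{\mathbf{u}}}$ instead, the same computation goes through and produces the conjugate factor $\overline{\gamma(\cdots)}$. With the stated formula, the convention without conjugation is the consistent one.

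The first step is to apply the transfer formula $\int f\,d(\mu\circ\Phi_\gamma^{-1})=\int f\circ\Phi_\gamma\,d\mu$ with $f=\chi_{\mathbf{u}}$, which is continuous and bounded. This gives
\[
\widehat{\mu\circ\Phi_\gamma^{-1}}(\chi_{\mathbf{u}})=\int \chi_{\mathbf{u}}(\Phi_\gamma(\mathbf{z}))\,d\mu(\mathbf{z}).
\]

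The second step uses that $\Phi_\gamma$ is translation by the point $(\gamma(g_1),\dots,\gamma(g_n))$ and that $\chi_{\mathbf{u}}$ is a multiplicative character. Hence
\[
\chi_{\mathbf{u}}(\Phi_\gamma(\mathbf{z}))=\chi_{\mathbf{u}}(\mathbf{z})\prod_{j=1}^n\gamma(g_j)^{u_j}.
\]
Since $\gamma$ is a homomorphism from $G$ (written additively) into $\mathbb{T}$, the product $\prod_j\gamma(g_j)^{u_j}$ equals $\gamma(u_1g_1+\cdots+u_ng_n)$; this holds for negative $u_j$ as well, because $\gamma(-g)=\gamma(g)^{-1}$.

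The last step is to pull this constant out of the integral, which yields the claimed identity. No step is a real obstacle. The only care needed is the sign/conjugation convention for Fourier coefficients, and checking that restricting to $H$ is harmless: $H$ is $\Phi_\gamma$-invariant, and the characters of $H$ are restrictions of the $\chi_{\mathbf{u}}$.
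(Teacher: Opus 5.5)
Your proposal is correct and follows the paper's proof: apply the change-of-variables formula for the image measure to write the coefficient as $\int_H \chi_{\mathbf{u}}\circ\Phi_\gamma\,d\mu$, then use $\chi_{\mathbf{u}}\circ\Phi_\gamma(\mathbf{z})=\gamma(u_1g_1+\cdots+u_ng_n)\,\chi_{\mathbf{u}}(\mathbf{z})$ and pull the constant out of the integral. Your remarks on the unconjugated convention, on negative $u_j$, and on the $\Phi_\gamma$-invariance of $H$ agree with the paper and make the computation slightly more explicit than it is there.
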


{\em Proof of Lemma \ref{lem:Fourier}.} 
Let us compute the Fourier coefficient of 
$\mu \circ \Phi_\gamma^{-1} $.  Take a typical character $\chi_\mathbf{u}(\mathbf{z})= z_1^{u_1}\cdots z_n^{u_n}$, we have
 \begin{eqnarray*}
     \widehat{\mu\circ \Phi_\gamma^{-1}} (\chi_\mathbf{u})
     &=& \int_H \chi_\mathbf{u} \circ \Phi_\gamma(\mathbf{z}) d\mu(\mathbf{z}).
 \end{eqnarray*}
 We conclude by using   the relation
 $$
    \chi_\mathbf{u} \circ \Phi_\gamma(\mathbf{z}) = (z_1\gamma(g_1))^{u_1} \cdots  (z_n\gamma(g_1))^{u_n} =  \gamma(u_1g_1+\cdots +u_n g_n)  \chi_\mathbf{u}(\mathbf{z}).
 $$

 {\em Proof of (a): Unique ergodicity of the action on $H$.}
 As a closed orbit, $H$ is invariant. We first prove that the $\widehat{G}$-action  $\{\Phi_{\gamma}\}_{\gamma\in \widehat{G}}$ restricted on the subgroup $H$ 
is uniquely ergodic. 
Let $\mu$ be an invariant probability measure on $H$, i.e. $\mu \circ \Phi_\gamma^{-1} =\mu$ for all $\gamma\in \widehat{G}$. 
 We claim that $\mu$ is the Haar measure on $H$, by showing that 
 $$
 \widehat{\mu}(\chi)=0 \ \ \ {\rm  for \ all}\ \ \  \chi \in \widehat{H}\setminus \{1\}.
 $$  
 Recall the well known fact $ \widehat{H}= \mathbb{Z}^n/H^\perp$. But $H^\perp =\{(u_1, \cdots, u_n)\in \mathbb{Z}^n : u_1g_1+\cdots+u_ng_n =0\}$, by Lemma \ref{lem:Hperp}.
 Thus,  a typical character $\chi \in \widehat{H}\setminus \{1\}$ takes the form 
 $\chi(z)= z_1^{u_1}\cdots z_n^{u_n}$ for some $(u_1, \cdots, u_n)\in \mathbb{Z}^n$ such that 
 $u_1g_1+\cdots +u_ng_n \not=0$.  By Lemma \ref{lem:Fourier}, the invariance of $\mu$ means
$$
  \forall \gamma \in \widehat{G}, \quad \gamma(u_1g_1+\cdots +u_n g_n) \widehat{\mu}(\chi) = \widehat{\mu}(\chi).
$$
However, $u_1g_1+\cdots +u_ng_n \not=0$ implies that $\gamma(u_1g_1+\cdots +u_n g_n) \not=1$ for some $\gamma \in \widehat{G}$. 
It follows that $ \widehat{\mu}(\chi)=0$.

{\em Proof of (a): Second argument for the unique ergodicity (by Hanfeng Li)}. Let $\mu$ be a Borel probability measure on $H$ invariant under the $\widehat{G}$-action $(\Phi_\gamma)_{\gamma\in \widehat{G}}$. That is, $\mu$ is invariant under the map $H\rightarrow H$ sending ${\bf x}$ to ${\bf x}{\bf z}'$ for every ${\bf z}'\in {\rm Orb}({\bf 1})$. By continuity, $\mu$ is also invariant under the map $H\rightarrow H$ sending ${\bf x}$ to ${\bf x}{\bf z}$ for every ${\bf z}\in H$. Thus $\mu$ is the Haar measure of $H$.

{\em Proof of (b): Minimality of $H$.}
Take a F{\o}lner  sequence $(F_n)$ of $\widehat{G}$. By Theorem \ref{thm:UE}, the unique ergodicity implies that the average $A_n\varphi(\mathbf{z})$ tends to $\int_H \varphi d\varphi d\mu_H$ for any $\mathbf{z} \in H$
and any Riemann integrable function $\varphi$ defined on  $H$, where 
$\mu_H$ is the Haar measure of $H$. Applying this to indicator functions of balls in $H$, we get that the orbit of any $\mathbf{z}$ in $H$ is dense in $H$.

{\em Proof of (b): Second  argument of the minimality (by Hanfeng Li)}    Let ${\bf z}'\in H$. Then 
$$\overline{{\rm Orb}({\bf z}')}=\overline{{\bf z}'{\rm Orb}({\bf 1})}={\bf z}'H=H.$$
Thus $H$ is minimal invariant.

{\em Proof of (c).}  The limit in (c) is a direct consequence of the unique ergodicity of the action on $zH$ and Theorem \ref{thm:UE}.

{\em Proof of (d).}  According to the decomposition, the action is unique ergodic if and only if  $H=G$, which by Theorem \ref{thm:W} is equivalent to that \eqref{eq:WC}
holds for all $(\theta_1, \cdots, \theta_n)$. The last assertion is equivalent to the Kronecker condition \eqref{eq:UEC}.

{\em Finishing of Proof: Conjugacy of two subsystems $H$ and $zH$.}
We have  proved  (a) and (b) for $H$. This implies that the conclusions of (a) and (b) also hold for an arbitrary coset $\mathbf{z}H$. Indeed, 
let $\tau_\mathbf{z}(\mathbf{w})=\mathbf{z}\mathbf{w}=(z_1w_1, \cdots, z_nw_n)$ be the translation by $\mathbf{z}$ on $\mathbb{T}^n$. We have $$\tau(H)= \mathbf{z}H; \quad \forall \gamma \in \widehat{G}, \ \ \tau_\mathbf{z} \circ \Phi_{\gamma} = \Phi_{\gamma} \circ \tau_\mathbf{z}.$$
So, the action on $\mathbf{z}H$ is conjugate to the action on $H$.

{\em Remark.} \ The proof of Theorem~\ref{thm:W} that we present above is essentially based on Theorem~\ref{thm:UE}, the criterion of unique ergodicity of amenable group actions.  Another conceptual argument is also possible: the
$\widehat{G}$-action $(\Phi_\gamma)_{\gamma\in \widehat{G}}$ defined by \eqref{eq:G^-action} is equicontinuous, even isometric. Therefore, by the theory of equicontinuous actions,  it admits a unique minimal and ergodic decomposition. It is then natural for  F{\o}lner averages come to play: the unique ergodicity of the subsystem $(H, (\Phi_\gamma))$ is equivalent to that the  F{\o}lner averages converge everywhere on $H$ to a constant, which is confirmed by Theorem~\ref{thm:UE}.

\subsection{When $\widehat{G}$ is compact.}
Here are some words about the case where $\widehat{G}$ is compact, like $G=\mathbb{Z}^d$ with $\widehat{G}=\mathbb{T}^n$.  
Then  it is about the action of a compact Abelian group on $\mathbb{T}^d$. This is to answer a question of H. Queff\'elec (personal communication). 

Let us repeat the setting. Take $n$ elements $m_1, \cdots, m_n$ from the discrete group $G$ (these are $n$ integral  lattice points in $\mathbb{Z}^d$ while  $\widehat{G} =\mathbb{T}^d$).
For $\gamma\in \widehat{G}$, we have
$$
\Phi_\gamma (\mathbf{z}) =(z_1 \gamma(m_1), \cdots, z_n \gamma(m_m)).
$$
As $\widehat{G}$ is compact, we take the trivial F$\o$lner sequence $\{F_n\}$ with $F_n =\widehat{G}$ for all $n\ge 1$. 
According to Theorem \ref{thm:K} and Theorem \ref{thm:UE},
the unique ergodicity of $\{\Phi_{\gamma}\}_{\gamma \in \widehat{G}}$ restricted to $H=\overline{{\rm Orb}(\mathbf{1})}$
is described by \eqref{eq:equiv}, which becomes now
\begin{equation}\label{eq:equiv2}
    \int_{\widehat{G}}\varphi(\Phi_\gamma(\mathbf{1})) d\gamma = \int_H \varphi(\mathbf{z})d\mathbf{z}.
\end{equation}
Let us prove this equality directly. The mapping $\gamma \mapsto \Phi_\gamma(\mathbf{1})$ is a group morphism from the compact group 
$\widehat{G}$ into the compact group $H (\subset \mathbb{T}^n)$. By the definition of $H$, this mapping is actually a surjective endomorphism. 
So,  under this mapping, the Haar measure of $H$ is the image of the Haar measure of $\widehat{G}$.  In other words, the equality \eqref{eq:equiv2} 
holds.  In the special case of $\widehat{G}=\mathbb{T}^d$, \eqref{eq:equiv2} reads as follows
$$
    \int_{\mathbb{T}^d} \varphi(e^{2\pi i m_1\cdot t}, \cdots, e^{2\pi i m_n \cdot t}) dt = \int_H \varphi(\mathbf{z})d\mathbf{z}
$$
where $H=\{e^{2\pi i m_1\cdot t}, \cdots, e^{2\pi i m_n \cdot t}: t\in \mathbb{T}^d\}$ is a closed subgroup of $\mathbb{T}^n$.

\subsection{Proof of Theorem \ref{thm:conj-LCA-torus-actions}}
We prove two implications and the uniqueness.

 {\em (2) implies (1)}.
Assume that there exists \(P=(p_{ij})\in GL(n,\mathbb Z)\) such that
\eqref{eq:Pg=h} holds. Since \(P=(p_{ij})\in GL(n,\mathbb Z)\),  the map
defined by \eqref{eq:autom}
is a topological group automorphism of \(\mathbb T^n\). 
We check that $\Psi_P$ is a conjugacy between the two actions $\Phi^h$ and $\Phi^g$. Indeed, for any \(\gamma\in \widehat G\),
as $\Phi_\gamma(z) = z \cdot \Phi_\gamma(1, \cdots,1)$ 
we have
\[
\Psi_P\bigl(\Phi^g_\gamma(z)\bigr)
=
\Psi_P(z)\cdot \Psi_P\bigl(\Phi^g_\gamma(1,\dots,1)\bigr).
\]
Now
\(
\Phi^g_\gamma(1,\dots,1)=\bigl(\gamma(g_1),\dots,\gamma(g_n)\bigr),
\)
hence
\[
\Psi_P\bigl(\Phi^g_\gamma(1,\dots,1)\bigr)
=
\left(
\prod_{j=1}^n \gamma(g_j)^{p_{1j}},
\ \dots,\
\prod_{j=1}^n \gamma(g_j)^{p_{nj}}
\right).
\]
Since \(\gamma\) is a character and \(h_i=\sum_{j=1}^n p_{ij}g_j\), this becomes
\[
\left(
\gamma\!\left(\sum_{j=1}^n p_{1j}g_j\right),
\ \dots,\
\gamma\!\left(\sum_{j=1}^n p_{nj}g_j\right)
\right)
=
\bigl(\gamma(h_1),\dots,\gamma(h_n)\bigr)
=
\Phi^h_\gamma(1,\dots,1).
\]
Therefore
\[
\Psi_P\circ \Phi^g_\gamma=\Phi^h_\gamma\circ \Psi_P
\qquad \forall \gamma\in \widehat G,
\]
so \(\Psi_P\) is a conjugacy. More generally, for any \(c\in \mathbb T^n\), the map
\(z\mapsto c\cdot \Psi_P(z)\) is also a conjugacy.

\medskip

{\em (1) implies (2).} 
Assume that \(H:\mathbb T^n\to \mathbb T^n\) is a homeomorphism satisfying
\[
H\circ \Phi^g_\gamma=\Phi^h_\gamma\circ H
\qquad \forall \gamma\in \widehat G.
\]
Let
\[
c:=H(1,\dots,1)\in \mathbb T^n,
\qquad
K(z):=c^{-1}\cdot H(z).
\]
Then \(K\) is still a conjugacy, and \(K(1,\dots,1)=(1,\dots,1)\). Indeed,
\[
K\circ \Phi^g_\gamma
=
c^{-1}\cdot H\circ \Phi^g_\gamma
=
c^{-1}\cdot \Phi^h_\gamma\circ H
=
\Phi^h_\gamma\circ K
\qquad \forall \gamma\in \widehat G.
\]
We claim that \(K\) is a group homomorphism of $\mathbb{T}^n$. Define
\[
B(z,w):=K(zw)\,K(z)^{-1}K(w)^{-1},
\qquad z,w\in \mathbb T^n.
\]
Fix \(w\in \mathbb T^n\). Since \(\Phi^g_\gamma\) and \(\Phi^h_\gamma\) are translation actions and
\(K\) is a conjugacy, we have for every \(\gamma\in \widehat G\),
\[
K\bigl(\Phi^g_\gamma(u)\bigr)=\Phi^h_\gamma\bigl(K(u)\bigr)
=K(u)\cdot \Phi^h_\gamma(\mathbf{1})
\qquad \forall u\in \mathbb T^n.
\]
Hence
\begin{align*}
B\bigl(\Phi^g_\gamma(z),w\bigr)
&=
K\bigl(\Phi^g_\gamma(z)w\bigr)\,
K\bigl(\Phi^g_\gamma(z)\bigr)^{-1}K(w)^{-1}\\
&=
K(zw)\,\Phi^h_\gamma(\mathbf{1})\,
\cdot \bigl(K(z)\,\Phi^h_\gamma(\mathbf{1})\bigr)^{-1}\cdot K(w)^{-1}\\
&=
K(zw)K(z)^{-1}K(w)^{-1}\\
&=
B(z,w).
\end{align*}
Thus, for each fixed \(w\), the map \(z\mapsto B(z,w)\) is continuous and invariant
under the minimal action \((\Phi^g_\gamma)_{\gamma\in\widehat G}\). By minimality, it must be
constant. Evaluating at \(z_0=\mathbf{1}\), and using \(K(\mathbf{1})=\mathbf{1}\), we obtain
\[
B(z,w)=B(\mathbf{1},w)= K(w) K(\mathbf{1})^{-1}K(w) =\mathbf{1}.
\]
Therefore
\[
K(zw)=K(z)K(w)
\qquad \forall z,w\in \mathbb T^n.
\]
So \(K\) is a continuous group automorphism of \(\mathbb T^n\). It follows that
\[
K=\Psi_P
\]
for some \(P=(p_{ij})\in GL(n,\mathbb Z)\).

Finally, evaluating the conjugacy relation at \(z=(1,\dots,1)\), we get
\[
\Psi_P\bigl(\Phi^g_\gamma(1,\dots,1)\bigr)
=
K\bigl(\Phi^g_\gamma(1,\dots,1)\bigr)
=
\Phi^h_\gamma\bigl(K(1,\dots,1)\bigr)
=
\Phi^h_\gamma(1,\dots,1).
\]
That means
\[
\left(
\prod_{j=1}^n \gamma(g_j)^{p_{1j}},
\ \dots,\
\prod_{j=1}^n \gamma(g_j)^{p_{nj}}
\right)
=
\bigl(\gamma(h_1),\dots,\gamma(h_n)\bigr)
\qquad \forall \gamma\in \widehat G.
\]
Equivalently,
\[
\gamma\!\left(\sum_{j=1}^n p_{ij}g_j-h_i\right)=1
\qquad \forall \gamma\in \widehat G,\ \ 1\le i\le n.
\]
Since characters separate points in a locally compact abelian group, this implies
\[
h_i=\sum_{j=1}^n p_{ij}g_j,
\qquad 1\le i\le n.
\]
Thus \eqref{eq:Pg=h} is established, and
\[
H(z)=c\cdot K(z)=c\cdot \Psi_P(z).
\]

{\em Uniqueness of $P$}. 
Suppose \(P,Q\in M_n(\mathbb Z)\) both satisfy
\[
h=Pg=Qg. 
\]
Then
\(
(P-Q)g=0.
\)
Writing this row by row, each row of \(P-Q\) gives an integer relation among
\(g_1,\dots,g_n\). Since the action associated with \(g\) is minimal, we have
\[
\sum_{i=1}^n u_i g_i=0,\quad u_i\in\mathbb Z
\ \Longrightarrow\
u_1=\cdots=u_n=0.
\]
Hence every row of \(P-Q\) is zero, so \(P=Q\). This proves uniqueness.

 \subsection{Examples of actions under $\mathbb{R}^d$ and $\mathbb{Z}^d$}\label{sect:examples}
 Let us consider certain \(\mathbb{R}^d\)-flows and \(\mathbb{Z}^d\)-actions on the torus \(\mathbb{T}^n\). Throughout this discussion, we view \(\mathbb{T}^n\) as the additive group
\(
\mathbb{R}^n/\mathbb{Z}^n,
\)
so that its identity element is \(\mathbf{0}=(0,\dots,0)^T\).

To distinguish between the \(\mathbb{R}^d\)-flow and the \(\mathbb{Z}^d\)-action, we denote by \(H_{\mathbb{R}^d,\mathbb{T}^n}\) (respectively, \(H_{\mathbb{Z}^d,\mathbb{T}^n}\)) the closure of the orbit of \(\mathbf{0}\) under the \(\mathbb{R}^d\)-flow (respectively, the \(\mathbb{Z}^d\)-action) on \(\mathbb{T}^n\). In these two cases, Lemma~\ref{lem:Hperp} takes the form
\[
H_{\mathbb{Z}^d,\mathbb{T}^n}^{\perp}
=
\{\mathbf{u}\in\mathbb{Z}^n : L\mathbf{u}\in\mathbb{Z}^d\},
\qquad
H_{\mathbb{R}^d,\mathbb{T}^n}^{\perp}
=
\{\mathbf{u}\in\mathbb{Z}^n : L\mathbf{u}=\mathbf{0}\}.
\]

Recall that \(\mathbb{Z}^d=\widehat{\mathbb{T}^d}\) and \(\mathbb{R}^d=\widehat{\mathbb{R}^d}\). Thus the condition \(L\mathbf{u}\in\mathbb{Z}^d\) means precisely that \(L\mathbf{u}=0_{\mathbb{T}^d}\) in \(\mathbb{T}^d\), whereas \(L\mathbf{u}=\mathbf{0}\) is to be understood as \(L\mathbf{u}=\mathbf{0}_{\mathbb{R}^d}\) in \(\mathbb{R}^d\). In particular, both
\[
H_{\mathbb{Z}^d,\mathbb{T}^n}^{\perp}
\quad\text{and}\quad
H_{\mathbb{R}^d,\mathbb{T}^n}^{\perp}
\]
are \(\mathbb{Z}\)-submodules of \(\mathbb{Z}^n\).

By Theorem~\ref{thm:K} (Kronecker's theorem), the points of \(H_{\mathbb{Z}^d,\mathbb{T}^n}\) (respectively, \(H_{\mathbb{R}^d,\mathbb{T}^n}\)) are precisely those vectors whose coordinates can be simultaneously approximated by
\[
L_1\mathbf{u},\dots,L_n\mathbf{u},
\qquad
\mathbf{u}\in\mathbb{Z}^d
\quad
\text{(respectively, } \mathbf{u}\in\mathbb{R}^d\text{)}.
\]
Equivalently,
\[
H_{\mathbb{Z}^d,\mathbb{T}^n}
=
\left\{
\theta\in\mathbb{T}^n :
\forall \mathbf{u}\in H_{\mathbb{Z}^d,\mathbb{T}^n}^{\perp},
\ \sum_{j=1}^n u_j\theta_j \in \mathbb{Z}
\right\},
\]
and
\[
H_{\mathbb{R}^d,\mathbb{T}^n}
=
\left\{
\theta\in\mathbb{T}^n :
\forall \mathbf{u}\in H_{\mathbb{R}^d,\mathbb{T}^n}^{\perp},
\ \sum_{j=1}^n u_j\theta_j \in \mathbb{Z}
\right\}.
\]
We can identify $L_j$ with a column vector in $\mathbb{R}^d$. Then we can write $L_j \mathbf{u}$ as $L_j \cdot \mathbf{u}$ (inner product)
or $L^T_j \mathbf{u}$ (matrix product), where $^T$ denotes the transpose. 
If we use $L$ to denote the $d\times n$-matrix with columns $L_1, \cdots, L_n$. Then
the action can be additively represented by matrix:
$$
   \mathbf{x} \mapsto  L^T \mathbf{u} + \mathbf{x}= (L_1^T\mathbf{u}+x_1, \cdots, L_n^T\mathbf{u})^T +x_n \in \mathbb{T}^n
$$
for $\mathbf{x} =(x_1, \cdots, x_n) \in \mathbb{T}^n$.

    \begin{example}  Assume $d=n=1$ and $L=(\alpha)$. 
    \begin{enumerate}
    \item  If $\alpha$ is irrational, we have 
     $H_{\mathbb{Z}^1, \mathbb{T}^1}= H_{\mathbb{R}^1, \mathbb{T}^1}=\mathbb{T}^1$. 
    \item 
    If $\alpha=\frac{p}{q}$ is rational with $p$ and $q$ coprime, 
    then $H_{\mathbb{Z}^1, \mathbb{T}^1}= \mathbb{Z}/q\mathbb{Z}$ and $ H_{\mathbb{R}^1, \mathbb{T}^1}=\mathbb{T}^1$. 
    \end{enumerate}
       \end{example}
       
       This is because if $\alpha$ is irrational we have 
        $$ H_{\mathbb{Z}^1, \mathbb{T}^1}^\perp= H_{\mathbb{R}^1, \mathbb{T}^1}^\perp=\{0\}
        $$
        and if $\alpha = \frac{p}{q}$ with $p,q$ coprime we have
        $$ 
       H_{\mathbb{Z}^1, \mathbb{T}^1}^\perp=q\mathbb{Z}, \qquad H_{\mathbb{R}^1, \mathbb{T}^1}^\perp=\{0\}.
       $$

       \begin{example}  Assume $d=2$, $n=1$ and $L=(\alpha_1, \alpha_2)^T$.  Assume that 
       $\alpha_1=\frac{p_1}{q_1}$ with $p_1, q_1$ coprime,  and  $\alpha_2=\frac{p_2}{q_2}$ with $p_2, q_2$ coprime.
       Denote by $[q_1, q_2]$ the least common multiple of $q_1$ and $q_2$. 
       Then the orbit of $0\in \mathbb{T}$ under the $\mathbb{Z}^2$-action is equal to the cyclic group
        $$
           \frac{p_1}{q_1}\mathbb{Z} +   \frac{p_2}{q_2}\mathbb{Z}    \mod \mathbb{Z} = \mathbb{Z}/[q_1, q_2]\mathbb{Z}. 
       $$
           \end{example}
     
      Indeed, we have $H^\perp_{\mathbb{Z}^2, \mathbb{\mathbb{T}}} = \{n \in \mathbb{Z}:  n  (\frac{p_1}{q_1}, \frac{p_2}{q_2})^T \in \mathbb{Z}^2\} = [q_1, q_2]\mathbb{Z}$. So,
       $$
        H_{\mathbb{Z}^2, \mathbb{\mathbb{T}}}  =\{\theta\in \mathbb{T}: \forall n \in \mathbb{Z}, [q_1, q_2]n \theta \in \mathbb{Z}\} =  \mathbb{Z}/[q_1, q_2]\mathbb{Z}.
       $$

     \begin{example}  Assume $d=1$, $n=2$ and $L=(\alpha_1, \alpha_2)$.  Assume that 
       $\alpha_1=\frac{p_1}{q_1}$ with $p_1, q_1$ coprime,  and  $\alpha_2=\frac{p_2}{q_2}$ with $p_2, q_2$ coprime.
       Denote by $[q_1, q_2]$ the least common multiple of $q_1$ and $q_2$. 
       Then the orbit $H_{\mathbb{Z}^1, \mathbb{T}^2}$ of $\mathbf{0}\in \mathbb{T}^2$ under the $\mathbb{Z}$-action is equal to the product of  cyclic groups
         $$
          \left( \frac{p_1}{q_1},    \frac{p_2}{q_2}\right) \mathbb{Z}  \mod \mathbb{Z}^2=  
          (\mathbb{Z}/q_1\mathbb{Z}) \times (\mathbb{Z}/q_2\mathbb{Z}).
       $$
    \end{example}
    
     Indeed, we have $H^\perp_{\mathbb{Z}, \mathbb{T}^2} = \{\mathbf{u} \in \mathbb{Z}^2:  \frac{p_1}{q_1}u_1 + \frac{p_2}{q_2}u_2 \in \mathbb{Z}\} = [q_1, q_2]\mathbb{Z}$. So,
       $$
        H_{\mathbb{Z}^2, \mathbb{\mathbb{T}}}  =\{\theta\in \mathbb{T}: \forall n \in \mathbb{Z}, [q_1, q_2]n \theta \in \mathbb{Z}\} =  \mathbb{Z}/[q_1, q_2]\mathbb{Z}.
       $$

    In order to describe the next example, we need the notion of Pythagoras angle.
    We say that $\theta \in [0, 2\pi)$ is a {\em Pythagoras angle}  if $\sin \theta$ and $\cos \theta$ are rational numbers. In this case, we can assume      
    $$
       \sin \theta = \frac{a}{c}, \quad \cos \theta =\frac{b}{c} \quad (a,b,c\in \mathbb{Z}, a^2+b^2=c^2, a \ {\rm and} \  b \ {\rm are \ coprime}).
    $$
    
    \begin{example}  Assume $d=2$, $n=3$ and 
    $$
     L= \begin{pmatrix}
                1 & 0 & \cos \theta\\
                0 & 1 & \sin \theta
     \end{pmatrix}     \ \ \ {\rm with} \ \ \theta \in [0, 2\pi).
     $$
     \begin{enumerate}
     \item 
     If $\theta$ is not a Pythagoras angle, we have 
     $$
         H_{\mathbb{Z}^2, \mathbb{T}^3}=\{ (0,0)\} \times \mathbb{T}, \qquad H_{\mathbb{R}^2, \mathbb{T}^3}= \mathbb{T}^3.
     $$
     \item
      If $\theta$ is a Pythagoras angle with Pythagoras triple $(a, b, c)$, we have 
     $$
         H_{\mathbb{Z}^2, \mathbb{T}^3}= \{(0,0)\} \times \mathbb{Z}/c\mathbb{Z}, \qquad H_{\mathbb{R}^2, \mathbb{T}^3}= 
         \{\alpha\in \mathbb{T}^3: b \alpha_1 + a \alpha_2- c \alpha_3 \in \mathbb{Z}\}.
     $$
     \end{enumerate}

     \end{example}
     
     In the following, we give detailed analysis of both $\mathbb{Z}^2$-action and $\mathbb{R}^2$-action.
     
    {\bf  $\mathbb{Z}^2$-action}.
    That $\mathbf{u}\in H_{\mathbb{Z}^2, \mathbb{T}^3}^\perp$ means that for some integers $m_1$ and $m_2$ we have
    $$
          u_1 + u_3 \cos \theta = m_1, \qquad  u_2 + u_3 \sin \theta =m_2.
    $$
    Clearly 
    $\mathbb{Z}^2 \times \{0\}\subset H_{\mathbb{Z}^2, \mathbb{T}^3}^\perp$. 
     Let us first find the condition for $H_{\mathbb{Z}^2, \mathbb{T}^3}^\perp= \mathbb{Z}^2 \times \{0\}$. 
    Now assume $u_3\not=0$. It is clear that if one of $\sin \theta$ and $\cos \theta$ is irrational, the above system has no solution. 
    We conclude that $H_{\mathbb{Z}^2, \mathbb{T}^3}^\perp= \mathbb{Z}^2 \times \{0\}$ when $\theta$ is not a Pythagoras angle.
    Then it is clear that
    $$
          H_{\mathbb{Z}^2, \mathbb{T}^3}= \{(0,0)\} \times \mathbb{T}. 
    $$

    Now assume $\sin \theta = \frac{a}{c}$ and $\cos \theta =\frac{b}{c}$, where $a,b,c$ are rational integers verifying $a^2+b^2=c^2$,
    with $a$ and $b$ coprime. The above system becomes 
        $$
          u_1 + u_3 \frac{b}{c} = m_1, \qquad  u_2 + u_3 \frac{a}{c} =m_2.
    $$
    Its solvability implies that $c|u_3$. So, write $u_3=cu_3'$. We rewrite the above system  as follows
      $$
          u_1 + b u_3'  = m_1, \qquad  u_2 + a u_3' =m_2.
    $$
    This implies $b|(m_1-u_1)$ and $a|(m_2-u_2)$. Write $m_1= u_1 + bk_1$ and $m_2=u_2+a k_2$. The system is now transformed into
    $ u_3' = k_1=k_2$. Therefore any integral triple  $(u_1, u_2, u_3')$ is a solution of the last system  if we take $m_1 = u_1+ b u_3'$ and $m_2=u_2+au_3'$.
    Thus, in this case, we get $H_{\mathbb{Z}^2, \mathbb{T}^3}^\perp= \mathbb{Z}^2 \times c \mathbb{Z}$. It follows that 
    $$
     H_{\mathbb{Z}^2, \mathbb{T}^3}=  \{(0,0)\}\times (\mathbb{Z}/c\mathbb{Z}).
    $$
    Of course, by iteration, we get immediately the orbit of $\mathbf{0}$, because
    $$
         L^T\mathbf{x} = (x_1, x_2, x_1\cos \theta +x_2\sin \theta) = (0,0, x_1\cos \theta +x_2\sin \theta) \ \ \mod \mathbb{Z}^3.
    $$

     {\bf  $\mathbb{R}^2$-action}. It is easier.
    That $\mathbf{u}\in H_{\mathbb{R}^2, \mathbb{T}^3}^\perp$ means that for some integers $m_1$ and $m_2$ we have
    $$
          u_1 + u_3 \cos \theta = 0, \qquad  u_2 + u_3 \sin \theta =0.
    $$
    If $\theta$ is not a Pythagoras angle, it is obvious that $\mathbf{u}\in H_{\mathbb{R}^2, \mathbb{T}^3}^\perp = \{(0,0,0)\}$ so that 
    $\mathbf{u}\in H_{\mathbb{R}^2, \mathbb{T}^3}= \mathbb{Z}^3$. 
    Suppose that  $\theta$ is not a Pythagoras angle with its triple $(a, b, c)$. The above system becomes 
    the following indeterminate system
    $$
          cu_1+ b u_3 =0, \qquad cu_2 + a u_3=0.
    $$
    First observe that $b|u_1, a|u_2, c|u_3$.  Write $u_1=bu_1', u_2=a u_2', u_3=cu_3'$. Then the above system becomes 
    $$
        u_1'+u_3' =0. \quad u_2'+ u_3'=0, \ \ i.e. \ \ u_1=u_2=-u_3.
    $$
    Thus we get $\mathbf{u}\in H_{\mathbb{R}^2, \mathbb{T}^3}^\perp = (b, a, -c)\mathbb{Z}$.
    Therefore $\alpha \in H_{\mathbb{R}^2, \mathbb{T}}$ means $n(b\alpha_1+ a\alpha_2 -c\alpha_3) \in \mathbb{Z}$ for all
    integer $n$, equivalently  $b\alpha_1+ a\alpha_2 -c\alpha_3=0 \mod \mathbb{Z}$. 
    \medskip
    
    Now let us state of condition of minimality or equivalently unique ergodicity.

   \begin{example}  Given $\alpha =(\alpha_1, \dots, \alpha_n)\in \mathbb{R}^n$. The translation $T_{\alpha}: \mathbb{T}^n \to \mathbb{T}^n$ defined by 
   $$
         T_\alpha(z) = z+ \alpha = (z_1 +\alpha_1, \cdots, z_n +\alpha_n)
   $$
   is the generator of the $\mathbb{Z}$-action defined by $\Phi_n(z) = z + n \alpha$ with $n \in \mathbb{Z}$. By Theorem \ref{thm:W}, this action is uniquely ergodic if and only if 
   $\alpha_1, \dots, \alpha_n$ is $\mathbb{Z}$-independent modulo $\mathbb{Z}$. 
   
   In the usual terms, this means that $1, \alpha_1, \dots, \alpha_n$ are $\mathbb{Q}$-independent.
   This result  is well known. On the other hand, by Theorem \ref{thm:W}, the $\mathbb{R}$-flow defined by $\Phi_t(z) = z+t\alpha$ with $t\in \mathbb{R}$ is uniquely ergodic if and only if $\alpha_1, \dots, \alpha_n$ are $\mathbb{Q}$-independent. This fact is useful for numerical computation \cite{JZ2014}.
   \medskip
   
   Remark that the condition of unique ergodicity for $\mathbb{Z}$-action is stronger than that for $\mathbb{R}$-flow when $n \ge 2$.
   When $n=1$, the irrationality ensures the ergodicity of both  $\mathbb{Z}$-action and  $\mathbb{R}$-flow. 
   When $n=2$, $\alpha=(1/2, \sqrt{2})$ produces a unique ergodic $\mathbb{R}$-flow, but the corresponding $\mathbb{Z}$-action is not unique ergodic.
   \end{example}

    \begin{example}  Given two vectors $\alpha =(\alpha_1, \dots, \alpha_n)$ and $\beta =(\beta_1, \dots, \beta_n)$ in $\mathbb{R}^n$. Let us consider the $\mathbb{R}^2$-flow on $ \mathbb{T}^n$ defined by 
   $$
         \Phi_{t_1, t_2}(z) = z+ t_1\alpha + t_2 \beta = (z_1 + t_1\alpha_1+ t_2 \beta_1, \cdots, z_n +t_1\alpha_n+t_2\beta_n).
   $$
   Our matrix is equal to
   $$
     P=(p_1, \cdots, p_n) = \begin{pmatrix} 
            \alpha_1 & \alpha_2 & \cdots & \alpha_n\\
            \beta_1 & \beta_2 & \cdots & \beta_n
     \end{pmatrix}.
  $$
   By Theorem \ref{thm:W}, a necessary and sufficient condition for the  $\mathbb{R}^2$-flow on $ \mathbb{T}^n$ to be uniquely ergodic is that the following system has no non-trivial solution
   $(k_1, \cdots, k_n) \in \mathbb{Z}^n$
      \begin{equation}\label{eq:UE-R2Flow}
           \begin{cases}
                 \alpha_1 k_1 + \cdots +  \alpha_n k_n=0& \\
                 \beta_1 k_1 + \cdots +  \beta_n k_n =0. &
     \end{cases}  
     \end{equation}
      Again by Theorem \ref{thm:W}, a necessary and sufficient condition for the  $\mathbb{Z}^2$-action on $ \mathbb{T}^n$ to be uniquely ergodic is that the following system has no non-trivial solution
   $(k_1, \cdots, k_n) \in \mathbb{Z}^n$
 \begin{equation}\label{eq:UE-Z2Flow}
           \begin{cases}
                 \alpha_1 k_1 + \cdots +  \alpha_n k_n=0 \mod 1\\
                 \beta_1 k_1 + \cdots +  \beta_n k_n =0 \mod 1   .
           \end{cases}
       \end{equation}  
      \end{example}

     {\em Remark.}     
     The above discussion concerning  with $d=2$ is valid for arbitrary time dimension $d$. The unique ergodicity for  $\mathbb{R}^d$-flows and $\mathbb{Z}^d$-actions on $\mathbb{T}^n$
    will be expressed by a system of $d$ equations. 
      
     We can even go further.  Given infinitely many linear forms $L_1, \cdots, L_n, \cdots$. We can consider the $\mathbb{R}^d$-flows (resp. $\mathbb{Z}^d$-actions) on the infinite-dimensional torus $\mathbb{T}^\mathbb{N}$. Then 
     the condition of unique ergodicity  is expressed 
     by \eqref{eq:UE-R2Flow} (resp. \eqref{eq:UE-Z2Flow}), but with an arbitrary integer $n$.

     \begin{example} 
   Assume that $\alpha\in \mathbb{R}$ is transcendental. Then the $\mathbb{R}$-flow on $\mathbb{T}^\mathbb{N}$ defined by
$$
         \Phi_t(z) = (z_0+ t, z_1 + \alpha t, z_2 +\alpha^2 t, \cdots, z_n + \alpha^n t, \cdots)
$$   
is uniquely ergodic.  But  it is not unique ergodic if $\alpha$ is algebraic. Indeed, 
the orbit of the neutral point of  $\mathbb{T}^\mathbb{N}$ consists of the points $$(t, \alpha t, \cdots, \alpha^n t, \cdots).$$
     If $\alpha$ is  algebraic of degree $m$, then the coordinates $\alpha^n t$ with $n \ge m$ are functions of the first 
     $m$ coordinates. So, the closed orbit of the neutral point is an $m$-dimensional topological torus. 

\end{example}

 \section{Applications to harmonic analysis on LCA groups}\label{sect:group}
 
 Theorem~\ref{thm:group} is one of the fundamental results of this paper and is derived from the unique ergodic decomposition theorem, Theorem~\ref{thm:UE}. Both Theorem~\ref{thm:Wiener} and Theorem~\ref{thm:BM} then follow as consequences of Theorem~\ref{thm:group}.

 .
\subsection{Proof of Theorem \ref{thm:group}}

We present two proofs. One is based on Theorem \ref{thm:UE} and the other is based on  Pontryagin duality.

{\em First proof.} Fix $g\in G\setminus\{0\}$. Consider the 
$\widehat{G}$-action on $\mathbb{T}$ defined by 
$\Phi_\gamma(z)= z\gamma(g)$. Let us consider  the closed orbit of $1$:
$$
H:= \overline{ \{\gamma(g) \in \mathbb{T}: \gamma \in \widehat{G}\}}.
$$
It is the whole $\mathbb{T}$ or a cyclic subgroup of $\mathbb{T}$. But it is not the trivial group $\{1\}$
because $g\not =0$. 
By Theorem \ref{thm:W}, the action restricted on $H$ is uniquely ergodic. So, by Theorem \ref{thm:UE}, for any continuous function
$\varphi$ on $H$ and for any F{\o}lner  sequence $(F_n)$ of $\widehat{G}$, we have
$$
\forall z\in H, \quad \lim_{n\to \infty}\frac{1}{|F_n|}
\int_{F_n} \varphi(\Phi_\gamma(z)) = \int_H \varphi(w) d\mathbf{m}_H(w)
$$
where $\mathbf{m}_H$ is the Haar measure of $H$.
Take $\varphi(w) =w$,  which is a non-trivial character of $H$, so that
$\int_H \varphi(w) d\mathbf{m}_H(w)=0$. On the other hand, 
$\varphi(\Phi_\gamma(1))= \gamma(g)$. Thus, the above equality applied to $\varphi(w)=w$ and $z=1$ gives the desired result
\eqref{eq:W0}.

{\em Second proof (by Hanfeng Li).} For any $g \in G\setminus \{0\}$, by Pontryagin duality one can find some $\gamma'\in \widehat{G}$ such that $\gamma'(g)\neq 1$. For each $n$, one has
$$ \gamma'(g)\int_{F_n} \gamma(g)\, d\gamma=\int_{F_n}(\gamma+\gamma')(g)\, d\gamma=\int_{F_n+\gamma'} \gamma(g)\, d\gamma.$$
Thus
\begin{align*}
\left|(\gamma'(g)-1)\frac{1}{|F_n|}\int_{F_n}\gamma(g)\, d\gamma \right|&=\left|\frac{1}{|F_n|}\left(\int_{F_n+\gamma'} \gamma(g)\, d\gamma-\int_{F_n} \gamma(g)\, d\gamma\right)\right|\\
&\le \frac{1}{|F_n|}|F_n\Delta (F_n+\gamma')|\to 0,
\end{align*}
as $n\to \infty$. Therefore \eqref{eq:W0} holds.

\subsection{Proof of Theorem \ref{thm:Wiener}}
The essential new input is Theorem~\ref{thm:group}; apart from this, the proof is the same as in the classical setting.
Fix \(x\in G\), and define
\[
g_n(y):=\frac{1}{|F_n|}\int_{F_n}\gamma(x-y)\,d\gamma
      =\frac{1}{|F_n|}\int_{F_n}\gamma(x)\overline{\gamma(y)}\,d\gamma.
\]
By Theorem~\ref{thm:group}, \(g_n(y)\to 0\) for every \(y\neq x\), while \(g_n(x)\to 1\). On the other hand, \(|g_n(y)|\le 1\) for all \(y\in G\) and all \(n\). Hence, by the dominated convergence theorem,
\[
\mu(\{x\})
=
\lim_{n\to\infty}\int_G g_n(y)\,d\mu(y)
=
\lim_{n\to\infty}\frac{1}{|F_n|}\int_G
\left(\int_{F_n}\gamma(x)\overline{\gamma(y)}\,d\gamma\right)d\mu(y).
\]
Applying Fubini's theorem then yields~(1).

To prove~(2), we apply~(1) to the measure \(\mu*\widetilde{\mu}\) at the point \(x=0\), where \(\widetilde{\mu}\) is defined by
\[
\widetilde{\mu}(A):=\overline{\mu(-A)}.
\]
Equivalently, \(\widetilde{\mu}\) is characterized by
\(
\widehat{\widetilde{\mu}}(\gamma)=\overline{\widehat{\mu}(\gamma)}.
\)
We then obtain
\[
(\mu*\widetilde{\mu})(\{0\})
=
\lim_{n\to\infty}\frac{1}{|F_n|}\int_{F_n}|\widehat{\mu}(\gamma)|^2\,d\gamma.
\]
However, by decomposing $\mu$ into its discrete part and continuous part, one gets
\[
(\mu*\widetilde{\mu})(\{0\})
=
\sum_{x\in G}|\mu(\{x\})|^2,
\]
We have thus proved (2).

\subsection{Proof of Theorem \ref{thm:BM}}

A fundamental result concerning LCA groups is the Bohr orthogonality of group characters. It is a direct consequence of \ref{thm:group} . Because of its importance for us, we state it as the following lemma.

\begin{lemma}\label{lem:orthog-gamma}
     Let $\widehat{G}$ be the Pontryajin dual group of a LCA group $G$. We have $\mathcal{M}(\gamma' \cdot \overline{\gamma''})=0$ for any two distinct characters
$\gamma', \gamma''\in \widehat{G}$. 
In particular, $\mathcal{M}(\gamma)=0$
for all non-trivial character $\gamma$.
    \end{lemma}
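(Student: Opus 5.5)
We need to prove: $\mathcal{M}(\gamma'\overline{\gamma''})=0$ for distinct characters $\gamma',\gamma''\in\widehat{G}$, where $\mathcal{M}$ is the Bohr mean on $AP(G)$. In this subsection $\mathcal{M}(f)$ is to be understood via the Følner average \eqref{eq:MU}, over a Følner sequence $(F_n)$ in $G$ itself (the averaging variable is $x\in G$). The plan is to reduce the claim to Theorem~\ref{thm:group} applied with the roles of $G$ and $\widehat{G}$ exchanged, using Pontryagin duality.

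The first step is to note that $\gamma'\overline{\gamma''}=\gamma'-\gamma''$ is a character of $G$ (written additively in $\widehat{G}$), nontrivial because $\gamma'\neq\gamma''$. It therefore suffices to prove $\mathcal{M}(\gamma)=0$ for every nontrivial $\gamma\in\widehat{G}$. Since $\gamma$ is a character, $\gamma(x+g)=\gamma(g)\gamma(x)$, and the average in \eqref{eq:MU} equals
\[
\gamma(g)\,\frac{1}{|F_n|}\int_{F_n}\gamma(x)\,dx .
\]
We are thus reduced to showing that $\frac{1}{|F_n|}\int_{F_n}\gamma(x)\,dx\to 0$.

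The second step invokes duality. Put $\Gamma:=\widehat{G}$, which is an LCA group. By Pontryagin duality $\widehat{\Gamma}\cong G$, with $x\in G$ acting on $\Gamma$ through the evaluation $\gamma\mapsto\gamma(x)$. The given $(F_n)$ is then a Følner sequence in $\widehat{\Gamma}$, and $\gamma\in\Gamma\setminus\{0\}$. Theorem~\ref{thm:group}, applied to $\Gamma$ in place of $G$, gives
\[
\lim_{n\to\infty}\frac{1}{|F_n|}\int_{F_n}x(\gamma)\,dx
=\lim_{n\to\infty}\frac{1}{|F_n|}\int_{F_n}\gamma(x)\,dx=0 .
\]
Multiplying by the bounded factor $\gamma(g)$ preserves the limit, so $\mathcal{M}(\gamma)=0$, and this holds uniformly in $g$. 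Applying this to $\gamma=\gamma'\overline{\gamma''}$ yields the first assertion. The second assertion is the special case $\gamma''=1$.

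The only delicate point is the identification in the second step. One must check that the Haar measure on $G$ corresponds to the Haar measure on $\widehat{\Gamma}$ under the duality isomorphism; it does, up to a constant, and the constant cancels in the normalized average. One must also check that the pairing $x(\gamma)=\gamma(x)$ is exactly the one used in Theorem~\ref{thm:group}. Alternatively, the duality can be bypassed by repeating Hanfeng Li's second proof directly in $G$: choose $x_0\in G$ with $\gamma(x_0)\neq1$, translate $F_n$ by $x_0$, and bound the difference of the two averages by $|F_n\Delta(F_n+x_0)|/|F_n|\to0$. I would present the duality reduction and mention this direct argument as a check.
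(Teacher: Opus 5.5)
Your proposal is correct and follows the paper's route. The paper also derives the lemma by regarding $G$ as the dual of $\widehat{G}$ via Pontryagin duality and applying Theorem~\ref{thm:group} to the nontrivial character $\gamma'\overline{\gamma''}$; your extra details (factoring out $\gamma(g)$ for the translated averages, the Haar-measure constant cancelling, and the direct translation argument as a check) fill in steps the paper leaves implicit.
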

    
    To derive this lemma from Theorem~\ref{thm:group}, we regard \(G\) as the dual of \(\widehat{G}\), since Theorem~\ref{thm:group} is formulated in terms of averages over the dual group.
    
    We now prove Theorem~\ref{thm:BM} as consequence of Lemma \ref{lem:orthog-gamma} and Bohr's theorem stating that almost periodic functions are uniformly limits of trigonometric polynomials.   
Let us first consider a trigonometric polynomial on \(G\) of the form
\[
P(x)=a_0+\sum_{\gamma} a_\gamma \gamma(x)
\]
where the sum ranges over a finite set of nontrivial characters of \(G\). By the second assertion of Lemma~\ref{lem:orthog-gamma}, we have
\[
\mathcal{M}(P)=a_0.
\]
Now let \((P_m)\) be a sequence of trigonometric polynomials converging uniformly to \(f\), and let \(a_m\) denote the constant term of \(P_m\). By the above argument, \(a_m=\mathcal{M}(P_m)\) for every \(m\). Moreover,
\[
|a_m|=|\mathcal{M}(P_m)|\le \|P_m\|_\infty,
\]
so the sequence \((a_m)\) is bounded.

Fix \(\varepsilon>0\). Since \(P_m\to f\) uniformly, there exists \(M>0\) such that
\begin{equation}\label{eq:UC}
|f(x)-P_m(x)|\le \varepsilon
\qquad
\text{for all }x\in G\text{ and all }m\ge M.
\end{equation}
It follows that, for every \(m\ge M\),
\begin{equation}\label{eq:BM1}
\limsup_{n\to\infty}
\left|
\frac{1}{|F_n|}\int_{F_n} f(x)\,dx-a_m
\right|
\le
\limsup_{n\to\infty}
\frac{1}{|F_n|}\int_{F_n}|f(x)-P_m(x)|\,dx
\le \varepsilon.
\end{equation}
Let \(\ell\) be any limit point of \((a_m)\). Passing to a subsequence \(a_{m_k}\to \ell\) in \eqref{eq:BM1}, we obtain
\[
\lim_{n\to\infty}\frac{1}{|F_n|}\int_{F_n} f(x)\,dx=\ell.
\]
We have thus proved that  the limit defining \(\mathcal{M}(f)\) exists and  \(\mathcal{M}(f) =\ell\). Since $\ell$  depends only on the constant terms \(a_m\), it follows that \(\mathcal{M}(f)\) is independent of the choice of the Følner sequence \((F_n)\). So, $(a_m)$ has a unique limit point. This shows that the sequence \((a_m)\) actually converges, with
\[
\lim_{m\to\infty} a_m=\mathcal{M}(f).
\]

Finally, observe that the polynomial \(P(x)\) and its translate \(P(x+g)\) have the same constant term. Hence the previous argument remains valid with \(F_n\) replaced by \(F_n-g\). More precisely, for every \(g\in G\) and every \(m\ge M\),
\begin{equation*}\label{eq:BM2}
\limsup_{n\to\infty}
\left|
\frac{1}{|F_n|}\int_{F_n-g} f(x)\,dx-a_m
\right|
\le
\limsup_{n\to\infty}
\frac{1}{|F_n|}\int_{F_n}|f(x+g)-P_m(x+g)|\,dx
\le \varepsilon.
\end{equation*}
where the estimate \eqref{eq:UC} is used. Thus we have proved that 
 the convergence
\[
\frac{1}{|F_n|}\int_{F_n-g} f(x)\,dx \longrightarrow \mathcal{M}(f)
\]
is uniform in \(g\in G\).

\section{Application to  numerical solution of eigenproblem of Schr\"{o}dinger equation}

In this last section, we develop another example in the spirit of Example 4 in Section \ref{sect:examples}. 
It  confirms that the numerical projection indicator method used by physicists is effective to solve the eigenproblem of Schr\"{o}dinger equation.

We follow \cite{GXYY2023} to present this numerical method.
In a variety of fields such as materials
science, condensed-matter physics, optics, and electronics, 
moir\'e systems arise. 
A moir\'e system is a system that involves two or more periodic
structures with different lattice  orientation, which
interact with each other to form a spatial moiré pattern. 
For example, Moir\'e patterns are  crucial indications in all  physical areas
related to wave propagation, such as Bose-Einstein condensates and optics. Moir\'e patterns  enable  the possibility to
explore the phenomena  of transition
from aperiodic (incommensurate) to periodic (commensurate)
geometries, occurring at specific values of the rotation angle
in contrast to aperiodic quasicrystal systems.
The mathematical model in these fields is a
Schrödinger-like equation 
$$
   i\frac{\partial \psi}{\partial t} = - \frac{1}{2} +  V(\mathbf{r}) \psi, \quad  \text{with} \ \ \ 
   V(\mathbf{r}) =  \frac{E_0}{1+ I(\mathbf{r})} 
$$
where $\mathbf{r} =(x, y) \in \mathbb{R}^2$ and $$
I(\mathbf{r}) = |p_1 v(\mathbf{r}) +p_2 v(R_\theta(\mathbf{r}))|^2
$$ is the intensity of the moir\'e lattice induced by two ordinarily polarized mutually coherent periodic sublattices, 
$v(\mathbf{r})$ and  $v(R_\theta(\mathbf{r}))$. Here $R_\theta$ is a rotation of angle $\theta$ of the plane.  
The constants $p_1, p_2$ represent the amplitudes of the first and the secon sublattices and the latices ratio is defined by the quotient $p_1/p_2$, which will be assumed to be $1$.  
The potential $V(\mathbf{r})$ describes the optical response of the photorefractive cristal with strength $E_0$. Physicists observe that the moir\'e lattices of such two square lattices are periodic when 
$\theta$ is a Pythagoras angle (i.e. $\tan \theta$ is rational) and aperiodic otherwise.  

Physicists (cf. \cite{GXYY2023}) developed a numerical method,  called projective indicator method (PI method for short),  to solve the eigenproblem of the Schr\"odinger equation
\begin{equation}\label{eq:schrod}
     E \psi = - \frac{1}{2} \Delta \psi +  V(\mathbf{r}) \psi
\end{equation}
by combining the projection method \cite{JZ2014} and the indicator projection with the plane-wave discretization (Fourier method). 
For example, we can choose the projection matrix
$$
   \mathbf{P}= 2 \begin{pmatrix}
\cos\gamma & -\sin\gamma & \cos\gamma & \sin\gamma\\
\sin\gamma& \cos\gamma & -\sin\gamma & \cos\gamma
\end{pmatrix} \quad \text{with} \ \ \ \gamma = \frac{\theta}{2}.
$$
Notice that $   \mathbf{P} = 2(R_\gamma, R_{-\gamma})$. By the projection matrix $\mathbf{P}$, we transit 
from $2D$ space to $4D$ space, through $\mathbf{q}=\mathbf{P}^T\mathbf{r}$, with $\mathbf{q}=(q_1, q_2, q_3, q_4)\in \mathbb{R}^4$. 
Assume $\psi (\mathbf{r})= \phi\circ \mathbf{q}$ for some periodic function of $4$ variables with period $2\pi$ for each variable, where $\mathbf{q}= \mathbf{P}^T \mathbf{r}$ (i.e. $\psi$ is assumed quasi-periodic).  
Then the equation \eqref{eq:schrod} becomes an  eigenproblem for periodic function
\begin{equation}\label{eq:schrod2}
     E \phi = - \frac{1}{2} \sum_{1\le i, j\le 4} \frac{\partial^2\phi}{\partial q_i \partial q_j}
     \left(\frac{\partial q_i}{\partial x} \frac{\partial q_j}{\partial x}+ \frac{\partial q_i}{\partial y} \frac{\partial q_j}{\partial y} \right) +  U(\mathbf{q}) \phi
\end{equation}
under the assumption that $V$ is quasi-periodic, i.e., $V(\mathbf{r}) =U\circ \mathbf{q}$ with $\mathbf{q}= \mathbf{P}^T \mathbf{r}$ for some $U$.  

Fix a large integer $N\ge 1$ and let $\Omega_N = [-N, N]^4 \cap \mathbb{Z}^4$. 
To numerically solve \eqref{eq:schrod2}, one expands the numerical unknown function $\phi_N$  using the plane-wave expansion (partial Fourier series)
\begin{equation}\label{eq:Fourier}
     \phi_N(\mathbf{q}) = \sum_{\mathbf{k} \in \Omega_N} \widehat{\phi}_N(\mathbf{k}) e^{i \mathbf{k} \cdot \mathbf{q}}, \qquad \mathbf{q} \in \left(\mathbb{R}/2\pi \mathbb{Z}\right)^4.
\end{equation}  
Taking Fourier transform on the both sides of \eqref{eq:schrod2}, the Fourier coefficients of the unknown function $\phi_N$ satisfies the system of  equations: 
\begin{equation}\label{eq:matrix}
  E \widehat{\phi_N}(\mathbf{k}) = \frac{1}{2} \sum_{i=1}^2 \sum_{1\le j, \ell\le 4} \widehat{\phi_N}(\mathbf{k})k_jk_\ell \frac{\partial q_j}{\partial r_i}  \frac{\partial q_\ell}{\partial r_i}
  +\widehat{U\psi_N}(\mathbf{k}).
\end{equation}
Effective numerical methods are needed to solve this equation \eqref{eq:matrix} (cf.   \cite{GXYY2023}, \cite{JLM3Z2025}, \cite{JZ2014}, \cite{JZ2018}). When the coefficients $\widehat{\phi}_N(\mathbf{k}) $  are computed, we get an approximative solution $\phi_N$ of the equation 
\eqref{eq:schrod2} by the formula \eqref{eq:Fourier}  then an approximative solution $\psi_N$ of the equation of 
\eqref{eq:schrod}  through $\phi_N (\mathbf{q})= \psi_N(\mathbf{r})$, $\mathbf{q} =\mathbf{P}^T\mathbf{r}$.

But $\mathbf{P}^T$ is not invertible. We can not say that $\psi_N(\mathbf{r})=\phi_N((\mathbf{P}^{-1} \mathbf{q}))$. 
However, there is a correspondence between $\psi$'s and $\phi$'s (cf. \cite{FJZ2025}, \cite{JZ2014}) under the condition that the $\mathbb{Q}$-rank of $\mathbf{P}$ is equal to two, 
or equivalently, the $\mathbb{R}^2$-flow on $\mathbb{T}^4$ is uniquely ergodic. We are going to show that 
this flow is uniquely ergodic if and only if $\tan \gamma$ is irrational (or the angle $\gamma$ is not Pythagoras).

The factor $2$ in the projection matrix $\mathbf{P}$ has no impact on  the dynamical property that we are interested in. So, we consider
\[
L=
\begin{pmatrix}
\cos\gamma & -\sin\gamma  & \cos\gamma  & \sin\gamma \\
\sin\gamma  & \cos\gamma  & -\sin\gamma  & \cos\gamma 
\end{pmatrix},
\qquad \gamma \in[0,2\pi),
\]
and consider the \( \mathbb R^2\)-flow on \(\mathbb T^4=\mathbb R^4/\mathbb Z^4\)
defined, in additive notation, by
\[
\Phi_t(x)=x+L^T t \pmod{\mathbb Z^4},
\qquad t\in\mathbb R^2,\ x\in\mathbb T^4.
\]
Let
\[
H:=H_{\mathbb R^2,\mathbb T^4}:=\overline{{\rm Orb}(0)}
\]
be the closure of the orbit of \(0\in\mathbb T^4\). We have
\[
H^\perp=\{u\in\mathbb Z^4:\ Lu=0\}.
\]
According to Theorem \ref{thm:W} (d), the flow is unique ergodic if and only if $H^\perp=\{0\}$.

\begin{thm}
The flow is  uniquely ergodic if and only if
\[
\tan\gamma \notin\mathbb Q\cup\{\infty\}.
\]

\end{thm}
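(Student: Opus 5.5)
By Theorem~\ref{thm:W}(d), together with the description of $H^\perp$ just given, the flow is uniquely ergodic if and only if the only integer vector $u=(u_1,u_2,u_3,u_4)\in\mathbb Z^4$ with $Lu=0$ is $u=0$. So the plan is to solve the linear system $Lu=0$ over $\mathbb Z$ explicitly. Writing $c=\cos\gamma$ and $s=\sin\gamma$ and regrouping the columns of $L$, the system $Lu=0$ becomes
\[
c\,(u_1+u_3)+s\,(u_4-u_2)=0,\qquad s\,(u_1-u_3)+c\,(u_2+u_4)=0 .
\]
The point of the regrouping is that each equation involves only two integer combinations of the $u_j$. Moreover, the four combinations $u_1+u_3$, $u_1-u_3$, $u_4-u_2$, $u_2+u_4$ all vanish only when $u=0$.

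\emph{Sufficiency.} Assume $\tan\gamma\notin\mathbb Q\cup\{\infty\}$, so that $c\neq0$, $s\neq 0$ and $s/c\notin\mathbb Q$. In the first equation, if $u_4-u_2\neq 0$, then
\[
\tan\gamma=-\frac{u_1+u_3}{u_4-u_2}\in\mathbb Q,
\]
which is impossible. Hence $u_4=u_2$, and then $c\neq0$ forces $u_1=-u_3$. In the second equation, if $u_2+u_4\neq0$, then
\[
\tan\gamma=-\frac{u_2+u_4}{u_1-u_3}\in\mathbb Q \quad\text{(or } c=0\text{)},
\]
which is again impossible. Hence $u_2=-u_4$, and $s\neq 0$ forces $u_1=u_3$. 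Combining these gives $u=0$, so $H^\perp=\{0\}$ and the flow is uniquely ergodic.

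\emph{Necessity.} For each excluded value of $\gamma$ I will exhibit a nonzero $u$ with $Lu=0$. There are three cases.
\begin{itemize}
\item If $\tan\gamma=\infty$, i.e.\ $c=0$ and $s=\pm1$, then $u=(1,0,1,0)$ works.
\item If $\tan\gamma=0$, i.e.\ $s=0$ and $c=\pm1$, then $u=(1,0,-1,0)$ works.
\item If $\tan\gamma=p/q$ with $p,q\in\mathbb Z\setminus\{0\}$, the equations reduce to $q(u_1+u_3)+p(u_4-u_2)=0$ and $p(u_1-u_3)+q(u_2+u_4)=0$. Impose
\[
u_1+u_3=2p,\quad u_4-u_2=-2q,\quad u_1-u_3=2q,\quad u_2+u_4=-2p .
\]
The factor $2$ is there to avoid parity obstructions. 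This gives
\[
u=(p+q,\;q-p,\;p-q,\;-p-q)\in\mathbb Z^4,
\]
which is nonzero because $p\neq0$. A direct check shows $Lu=0$.
\end{itemize}
In each case $H^\perp\neq\{0\}$, so the flow is not uniquely ergodic.

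The only step needing care is the integrality of the witness in the rational case. Solving for the $u_j$ from prescribed values of $u_1\pm u_3$ and $u_4\mp u_2$ requires those values to have matching parity, and doubling them is what guarantees this. Everything else is immediate from Theorem~\ref{thm:W}(d).
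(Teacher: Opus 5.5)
Your proof is correct and follows the paper's route: you use the same regrouping of $Lu=0$ into two equations in $u_1\pm u_3$ and $u_4\mp u_2$, and the same irrationality argument for sufficiency. The only difference is in necessity, where you exhibit a single nonzero integer witness (doubling to sidestep parity). The paper instead writes $\sin\gamma=\lambda a$, $\cos\gamma=\lambda b$ with $\gcd(a,b)=1$, which covers $\tan\gamma\in\{0,\infty\}$ at once, and computes all of $H^\perp$ as a rank-two lattice; that is more than the statement needs.
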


\begin{proof}
We work throughout in additive notation on \(\mathbb T^4=\mathbb R^4/\mathbb Z^4\).

\noindent
\emph{Step 1: System \(Lu=0\).}
Write
\(
u=(u_1,u_2,u_3,u_4)\in\mathbb Z^4.
\)
The system \(Lu=0\) is equivalent to
\begin{equation}\label{eq:S1}
\begin{cases}
\cos\gamma\,(u_1+u_3)+\sin\gamma\,(-u_2+u_4)=0,\\[1mm]
\sin\gamma\,(u_1-u_3)+\cos\gamma\,(u_2+u_4)=0.
\end{cases}
\end{equation}
Introduce
\[
A=u_1+u_3,\qquad B=-u_2+u_4,\qquad
C=u_1-u_3,\qquad D=u_2+u_4.
\]
Then \eqref{eq:S1}  becomes
\begin{equation}\label{eq:S2}
\cos\gamma\,A+\sin\gamma\,B=0,\qquad
\sin\gamma\,C+\cos\gamma\,D=0.
\end{equation}

\medskip

\noindent
\emph{Step 2: $H^\perp =\{0\}$ in the irrational-slope case.}
Assume
\(
\tan\theta\notin\mathbb Q\cup\{\infty\}.
\)
Then \(\sin\gamma\neq 0\) and \(\cos\gamma\neq 0\), and both \(\tan\gamma\) and
\(\cot\gamma\) are irrational. From \eqref{eq:S2} we get
\[
A=-\tan\gamma\, B,\qquad C=-\cot\gamma\, D.
\]
Since \(A,B,C,D\in\mathbb Z\), the irrationality forces
\(
A=B=C=D=0.
\)
Therefore
\[
u_1+u_3=u_1-u_3=0,\qquad -u_2+u_4=u_2+u_4=0,
\]
hence \(u_1=u_2=u_3=u_4=0\). Thus
\(
H^\perp=\{0\}
\) and the flow is uniquely ergodic.

\medskip

\noindent
\emph{Step 3: $H^\perp \not=\{0\}$ in the rational-slope case.}
Assume now
\(
\tan\theta\in\mathbb Q\cup\{\infty\}.
\)
Choose coprime integers \(a,b\), not both zero, and \(\lambda\neq 0\) such that
\[
\sin\gamma=\lambda a,\qquad \cos\gamma=\lambda b.
\]
Then \eqref{eq:S2} becomes
\begin{equation}\label{eq:S3}
bA+aB=0,\qquad aC+bD=0.
\end{equation}
Since \((a,b)=1\), the integer solutions of \eqref{eq:S3} are
\[
A=-at,\qquad B=bt,\qquad C=-bu,\qquad D=au
\]
for some \(t,u\in\mathbb Z\). Recovering \(u_1,u_2,u_3,u_4\) from
\[
A=u_1+u_3,\quad C=u_1-u_3,\quad B=-u_2+u_4,\quad D=u_2+u_4,
\]
we obtain
\begin{equation}\label{eq:S4}
u_1=\frac{-at-bu}{2},\qquad
u_2=\frac{au-bt}{2},\qquad
u_3=\frac{-at+bu}{2},\qquad
u_4=\frac{au+bt}{2}.
\end{equation}
Therefore \(u\in\mathbb Z^4\) if and only if
\[
at+bu\equiv 0\pmod 2,
\qquad
au+bt\equiv 0\pmod 2,
\]
and this proves the general formula
\[
H^\perp
=
\left\{
\left(
\frac{-at-bu}{2},
\frac{au-bt}{2},
\frac{-at+bu}{2},
\frac{au+bt}{2}
\right)
:\ t,u\in\mathbb Z,\ 
at+bu\equiv au+bt\equiv 0\pmod 2
\right\}.
\]

\medskip

\emph{Case I: \(a\) and \(b\) have opposite parity.}
Then the two congruence conditions
\[
at+bu\equiv 0\pmod 2,\qquad au+bt\equiv 0\pmod 2
\]
force \(t\) and \(u\) to be even. Write
\[
t=2m,\qquad u=2n.
\]
Substituting into \eqref{eq:S4} gives
\[
u=m(-a,-b,-a,b)+n(-b,a,b,a).
\]
Hence
\[
H^\perp
=
\mathbb Z(-a,-b,-a,b)\oplus \mathbb Z(-b,a,b,a),
\]
which is not trivial. So, the flow is not uniquely ergodic.

\smallskip

\emph{Case II: \(a\) and \(b\) are both odd.}
Then
\[
at+bu\equiv t+u\pmod 2,\qquad au+bt\equiv t+u\pmod 2,
\]
so the parity condition is exactly
\[
t\equiv u\pmod 2.
\]
Hence there exist \(m,n\in\mathbb Z\) such that
\[
t=m+n,\qquad u=m-n.
\]
Substituting into \eqref{eq:S4}, we obtain
\[
u=m\,w_1+n\,w_2,
\]
where
\[
w_1=
\left(
-\frac{a+b}{2},
\frac{a-b}{2},
\frac{b-a}{2},
\frac{a+b}{2}
\right),
\qquad
w_2=
\left(
\frac{b-a}{2},
-\frac{a+b}{2},
-\frac{a+b}{2},
\frac{b-a}{2}
\right).
\]
Therefore
\[
H^\perp=\mathbb Z w_1\oplus \mathbb Z w_2,
\]
which is not trivial. So, the flow is not uniquely ergodic.
\end{proof}

\end{document}